	\newcommand{\arxiv}[2]{arXiv: \href{https://arxiv.org/abs/#1}{\texttt{#1 [#2]}}}
	\newcommand{\isbn}[1]{\textsc{isbn:} #1}
	\titleformat{\section}[hang]{\vspace{\baselineskip}\centering\uppercase}{\thesection}{1ex}{}{}
	\titleformat{\subsection}[hang]{\bfseries}{\thesubsection}{1ex}{}{}
	\newcounter{aussage}[section]
	\newcommand{\Umgebung}[2]{
		\newaliascnt{#1}{aussage}
		\newtheorem{#1}[#1]{#2}
		\newtheorem*{#1*}{#2}
		\aliascntresetthe{#1}
		\expandafter\providecommand\csname #1autorefname\endcsname{#2}	
	}
	\newtheoremstyle{nichtkursiv}{}{}{}{}{\bfseries}{}{ }{\thmname{#1}\thmnumber{ #2}\thmnote{ (#3)}}
	\newtheoremstyle{kursiv}{}{}{\itshape}{}{\bfseries}{}{ }{\thmname{#1}\thmnumber{ #2}\thmnote{ (#3)}}
	\theoremstyle{nichtkursiv}
	\theoremstyle{kursiv}
	\newcommand{\N}{\mathbb{N}}									
	\newcommand{\R}{\mathbb{R}}									
	\newcommand{\C}{\mathbb{C}}									
	\newcommand{\dd}{\text{\normalfont d}}							
	\renewcommand{\L}{\mathcal{L}}								
	\newcommand{\Normal}{\mathcal{N}}							
	\newcommand{\A}{\mathcal{A}}								
	\newcommand{\HT}{\mathcal{H}}								
	\newcommand{\E}{\mathbb{E}}									
	\newcommand{\Var}{\text{\normalfont Var}}						
	\newcommand{\Cov}{\text{\normalfont Cov}}						
	\renewcommand{\binom}[2]{\begin{pmatrix} \,#1\, \\ \,#2\, \end{pmatrix}}	
	\newcommand{\Kol}{d_K}										
	\newcommand{\Wass}{d_1}									
	\newcommand{\G}{{G_0}}										
\begin{document}

\mbox{}
\begin{center}
{\large\bfseries\uppercase{Kolmogorov bounds for decomposable random variables and subgraph counting by the Stein-Tikhomirov method}}\\[2\baselineskip]
Peter Eichelsbacher\footnote{Ruhr-University Bochum, Faculty of Mathematics, IB 2/115, D-44780 Bochum, Germany. E-Mail: \href{mailto:peter.eichelsbacher@rub.de}{peter.eichelsbacher@rub.de}} and Benedikt Rednoß\footnote{Ruhr-University Bochum, Faculty of Mathematics, IB 2/95, D-44780 Bochum, Germany. E-Mail: \href{mailto:benedikt.rednoss@rub.de}{benedikt.rednoss@rub.de}}\\[2\baselineskip]
\thispagestyle{plain}
\end{center}
\begin{abstract}
	We derive normal approximation bounds in the Kolmogorov distance for random variables posessing decompositions of
	\citet*{BKR89}. We highlight the example of 
	normalized subgraph counts in the Erdös-Rényi random graph.
	We prove a bound by generalizing the argumentation of \citet{Ro17},
	who used the Stein-Tikhomirov method to prove a bound in the special case of normalized triangle counts.
	Our bounds match the best available Wasserstein-bounds.\\[0.5cm]
	\medskip\noindent {\bf Mathematics Subject Classifications (2020)}:
		05C80, 
		60E10. 
		60F05, 
		\newline
	\noindent {\bf Keywords:}
		Berry-Ess\'een bound,
		central limit theorem,
		characteristic function,
		Erdös-Rényi random graph,
		Kolmogorov distance,
		normal approximation,
		Stein's method,
		Stein-Tikhomirov method,
		subgraph count.
\end{abstract}


\section{Introduction}\label{sec:Intro}

In his work \cite{Ti80}, Tikhomirov combined elements of Stein's method with the theory of charac\-teristic functions to derive Kolmogorov bounds for the convergence rate in the central limit theorem for a normalized sum of a stationary sequence of random variables satisfying one of several weak dependency conditions.
The combination of elements of Stein's method with the theory of characteristic functions is sometimes called \emph{Stein-Tikhomirov method}.
\citet*{AMPS17} successfully used the Stein-Tikhomirov method to bound the convergence rate in contexts with non-Gaussian targets.
\citet*{Ro17} used the Stein-Tikhomirov method to bound the convergence rate in the Kolmogorov distance for normal approximation of normalized triangle counts in the Erdös-Rényi random graph.

Consider a random graph $G(n,p)$ on $n \in \N$ vertices.
Each edge between two vertices is included with probability $p \in [0,1]$ independently of all other edges.
$p$ may depend on $n$.
This model is called \emph{Erdös-Rényi random graph}, though it was first introduced by \citet*{Gi59}.
To a given graph $G$,
let $v_G$ denote the number of vertices of $G$,
$e_G$ the number of edges of $G$,
$H \subset G$ a subgraph of $G$.\\
Now let $\G$ be a graph with $e_\G > 0$.
It will be useful to introduce the quantity
\begin{align*}
	\Psi := \Psi_{\G} := \displaystyle \min \big\{ n^{v_H} \cdot p^{e_H} \,\big\vert\, H \subset \G,  e_H > 0 \big\},
\end{align*}
which was used by \citet*{BKR89}.\\
Let $W$ be the normalized number of subgraphs of $G(n,p)$ which are isomorphic to $\G$.
Let $Z$ denote a standard Gaussian random variable.\\
Throughout this paper, $C$ (with or without index) will denote a constant, which may vary from line to line.
The indices of $C$ will denote the quantities on which $C$ depends.

\citet*{Ru88} provided a necessary and sufficient condition for asymptotic normality of $W$:
$W$ converges in distribution to $Z$ if and only if
\begin{align*}
	\Psi \xlongrightarrow{n\to\infty} \infty
	\quad\text{and}\quad
	n^2 \cdot (1-p) \xlongrightarrow{n\to\infty} \infty,
\end{align*}
(see Theorem~2 in \cite{Ru88}).
Shortly thereafter, \citet*{BKR89} proved the following bound for the convergence rate in the Wasserstein distance $\Wass$,
using Stein's method
(see Theorem~2 in \cite{BKR89}):
\begin{align*}
	\Wass(\L(W), \Normal(0,1))
	\leq{}& C_\G \cdot
	\begin{dcases}
		\frac1{n\sqrt{1-p}}
		& \text{ if }p > \frac12,\\
		\frac1{\sqrt{\Psi}}
		& \text{ if }p \leq \frac12.
	\end{dcases}
\end{align*}
Here $\Wass(\cdot,\cdot)$ is defined in \eqref{eq:DistDef} with $\HT$ being the class of real-valued Lipschitz functions with Lipschitz constant less than or equal to 1.
Subsequently, it has been a long standing problem, whether the same rate could be achieved in the Kolmogorov distance $\Kol$. Here the class
of real-valued test function is chosen to be the indicators of half intervals $x \mapsto 1_{\{ x \leq w \}}$ with $w \in \R$. In general, the Kolmogorov distance
satisfies the bound
\begin{equation} \label{comparison}
\Kol (\L(W), \Normal(0,1) \leq \sqrt{\Wass(\L(W), \Normal(0,1))},
\end{equation}
which is in most cases a crude bound. Typically, it can be shown that the left hand side of  \eqref{comparison} is of the same order as the bound for $\Wass(\L(W), \Normal(0,1))$. However, the derivation of the latter requires a much more delicate argument. 
Throughout the years, there were several results for special cases:
For triangle counts with $p = \theta_n n^{-\alpha}$, $\alpha \in [0,1)$, $(\theta_n)_{n\in\N}$ bounded and $\liminf_{n \rightarrow \infty} \theta_n > 0$, \citet*{KRT17} proved a bound for the Kolmogorov distance, using Malliavin-Stein method (see Theorem~1.1 in \cite{KRT17}).
However, their rate in the Kolmogorov distance does not match the rate of \citet*{BKR89} for the Wasserstein distance.
For arbitrary subgraphs and fixed $p \in (0,1)$, \citet*{KRT17} as well as \citet*{FMN17} derived bounds in the Kolmogorov distance (see Theorem~1.2 in \cite{KRT17} and Example~37 in \cite{FMN17}) similar to the best known Wasserstein bound by \citet*{BKR89}.
For arbitrary $p$, but only for triangles as subgraphs to count, \citet*{Ro17} derived Kolmogorov bounds by means of the Stein-Tikhomirov method, which match the best known Wasserstein bounds (see Theorem~1.1 in \cite{Ro17}).
Finally, based on the work of \citet*{KRT17},
\citet*{PS20} proved for the general setting of an arbitrary subgraph and arbitrary $p$, that
\begin{align*}
	\Kol(\L(W), \Normal(0,1))
	\leq{}& C_\G \cdot
	\frac1{\sqrt{(1-p) \cdot \Psi}}
\end{align*}
(see Theorem~4.2 in \citet*{PS20}),
which is the same rate as the one proven by \citet*{BKR89} for the Wasserstein distance. From a point of view of probability theory,
the proof presented in \cite{PS20} is quite involved. First one has to develop the Malliavin approach for statistics of Bernoulli random variables,
or for Rademacher sequences, respectively. This was worked out in \cite{KRT17}, pages 1077--1087. Next the approach was combined
to the Stein method to derive general Berry-Ess\'een bounds, see Theorem 3.1 in \cite{KRT17}. The authors in \cite{PS20} modified this
general bound in Proposition 2.1. Next they derive new bounds for sums of discrete multiple integrals applying the well-known multiplication formula of Wiener
integrals, see Theorem 3.1 in \cite{PS20}. They show that the number of subgraphs can be represented as a finite sum of discrete multiple integrals. Finally they derive laborious bounds for certain corresponding norms of contractions to obtain the result. In contrast to the Malliavin-Stein
approach, \cite{BKR89} represented the number of subgraphs of $G(n,p)$ which are isomorphic to $\G$ as a decomposable random variable in the sense, that
it is a sum of weakly dependent variables. In section 3, the so called BKR-decomposition will be introduced. In the work of \citet*{R03}, the BKR-decomposition
was considered as well. The author managed to prove a Berry-Ess\'een theorem for BKR-decompositions, if the different components of this decomposition are assumed to be bounded, see Theorem 5.1 in \cite{R03}. But interesting enough, a direct application of this result -- in the case of counting subgraphs -- does not lead to an optimal result.

In our paper we generalize the argumentation of \citet*{Ro17} to provide a simpler proof for the result of \citet*{PS20}.
The starting point will be the quite natural BKR-decomposition which will be introduced in section 3.
In particular, we will show the following theorem, which is our main theorem.
\begin{theo}\label{maintheorem}
	For $p_0 \in (0,1)$ we have
	\begin{align*}
		\Kol(\L(W), \Normal(0,1))
		\leq{}& C_{\G,p_0} \cdot
		\begin{dcases}
			\frac1{n\sqrt{1-p}}
			& \text{ if }p > p_0,\\
			\frac1{\sqrt{\Psi}}
			& \text{ if }p \leq p_0.
		\end{dcases}
	\end{align*}
\end{theo}

\noindent
Our proof starts with the following considerations:\\
Let $W$ be an integrable random variable with characteristic function $\varphi$.
Following the Stein-Tikhomirov method we are interested in the quantity
\begin{flalign*}
	&&
	\varphi^\prime(t) + t \cdot \varphi(t)
	&= \E [ iW e^{itW} ] + t \cdot \E[ e^{itW} ]\\
	&&
	&= \E[ (iW+t) \cdot e^{itW} ]
	&\forall t &\in \R.
\end{flalign*}
To analyze this quantity, we want to construct a stochastic process $(H_t)_{t\in\R}$ with
\begin{flalign*}
	&&
	\varphi^\prime(t) + t \cdot \varphi(t) &= t^2 \cdot \E[ H_t \cdot e^{itW} ]
	&\forall t &\in \R
\end{flalign*}
and $H_t$ integrable.
Note that this does not necessarily require, that $t^2 \cdot H_t = iW+t$.
It is sufficient if there is a stochastic process $(Y_t)_{t\in\R}$ with
\begin{flalign*}
	&&
	\E[ Y_t \cdot e^{itW} ] &= 0 &
	\text{and}&&
	t^2 \cdot H_t &= iW+t + Y_t
	&\forall t &\in \R.
\end{flalign*}
Under these assumptions we find
\begin{align*}
	\varphi^\prime(t) + t \cdot \varphi(t)
	&= t^2 \cdot \E[H_t \cdot e^{itW}] \\
	&= t^2 \cdot \E[H_t] \cdot \E[ e^{itW}] + t^2 \cdot \E \big[ (H_t - \E[H_t]) (e^{itW} - \E[e^{itW}]) \big] \\
	&= t^2 \cdot \E[H_t] \cdot \varphi(t) + t^2 \cdot \Cov(H_t , e^{-itW}).
\end{align*}
In \autoref{sec:STM} we will use the Stein-Tikhomirov method to derive a bound for $\Kol(W,Z)$,
which only depends on upper bounds of $\vert \E[H_t] \vert$ and $\vert \Cov(H_t , e^{-itW}) \vert$.
Under the additional assumption, that there is a decomposition of $W$ similar to the decomposition used by \citet*{BKR89},
we will find general upper bounds for $\vert \E[H_t] \vert$ and $\vert \Cov(H_t , e^{-itW}) \vert$ in \autoref{sec:BKR}.
Finally, in \autoref{sec:MT} we apply the upper bounds from the previous section to the subgraph counting problem and will prove \autoref{maintheorem}.

\section{The Stein-Tikhomirov method}\label{sec:STM}
The combination of elements of Stein's method with the theory of characteristic functions is called Stein-Tikhomirov method.

\subsection{Stein's method}\label{sub:SM}
In his work \cite{St72}, Stein derived a bound for the Kolmogorov distance between a sum of dependent random variables and a Gaussian target.
The method he used was later generalized and is now known as \emph{Stein's method}.
The most common distances between two random variables $X$ and $Z$ can be written as
\begin{align} 
	d_\HT (X,Z) = \sup_{h \in \HT} \big\vert \E[h(X)] - \E[h(Z)] \big\vert
	\label{eq:DistDef}
\end{align}
with $\HT$ some set of test-functions.
Usually, $Z$ is the target random variable in an approximation process.
The main idea of Stein's method is to construct a so called \emph{Stein-operator} $\A$ on a set of functions $M$
to characterize the distribution of the target random variable $Z$ in the following sense:
The random variable $X$ has the same distribution as $Z$ if and only if $\E[\A f(X)]=0$ for all $f \in M$.
Now the right side of \eqref{eq:DistDef} shall be expressed by $\A$ and $X$,
so that the target random variable $Z$ does not directly appear anymore.
This leads to
\begin{align}
	d_\HT (X,Z) = \sup_{h \in \HT} \big\vert \E[\A f_h(X)] \big\vert
	\label{eq:SteinEqua}
\end{align}
with $f_h$ a solution of the so called \emph{Stein-equation} $\A f_h = h - \E[h(Z)]$. 
In many cases the right side of \eqref{eq:SteinEqua} is much easier to bound than the right side of \eqref{eq:DistDef}.

In case of a Gaussian target $Z$, the most common Stein-operator is
\begin{align}
	\A f (x) := f^\prime (x) - x \cdot f(x)
	\label{eq:SteinOpGauss}
\end{align}
for all absolutely continuous functions $f$, for which the expectation $\E[f^\prime(Z)]$ exists
(see e.g. Lemma~2.1 in \cite{Ro11}).
Throughout the years, Stein-operators for many other target distributions have been constructed.
For a more detailed introduction to Stein's method, see e.g. \cite{Ro11}.

\subsection{Application to characteristic functions}\label{sub:TM}
Following the argumentation of \citet*{Ti80},
the Stein-Tikhomirov method focuses on characteristic functions.
Let $\varphi_X$ and $\varphi_Z$ be the characteristic functions of the random variables $X$ and $Z$.
The main goal of the Stein-Tikhomirov method is to derive a bound for $\vert \varphi_X - \varphi_Z \vert$.
This bound is then used to derive a bound for a probability distance:
The smoothing-lemma by Berry and Ess\'een provides a mean to transfer a bound for $\vert \varphi_X - \varphi_Z \vert$ to a bound for the Kolmogorov distance:
\begin{theo}[Berry-Ess\'een smoothing-lemma (see e.g. \cite{Lo77})]\label{theo:BE}
	If at least one of the random variables $X$ and $Z$ has a density bounded by $b > 0$, then
	\begin{flalign*}
		\phantom{\forall T}&\phantom{>0}&
		\Kol(X,Z) &\leq \frac1\pi \int_{-T}^T \bigg\vert \frac{\varphi_X(t) - \varphi_Z(t)}{t} \bigg\vert \, \dd t + \frac{24b}{\pi T}
		&\forall\, T &> 0.
	\end{flalign*}
\end{theo}

\citet*{AMPS17} provided two theorems by which (under certain additional assumptions) a bound for $\vert \varphi_X - \varphi_Z \vert$ can also be transferred to a bound for a modified Wasserstein distance (see Theorems~1 and~2 in \cite{AMPS17}).
However, in our paper we will focus on the Kolmogorov distance.

To derive a bound for  $\vert \varphi_X - \varphi_Z \vert$ in the first place,
the main idea of the Stein-Tikhomirov method is to characterize the characteristic function of the target random variable $Z$
by the so called \emph{Tikhomirov-operator} $\L$,
analogously to the Stein-operator, in the following sense:
The random variable $X$ has the same distribution as $Z$ if and only if $\L \varphi_X (t) = 0$ for all $t \in \R$.
In the next step, a bound for $\vert \L \varphi_X \vert$ is constructed.
This bound is finally transferred to a bound for  $\vert \varphi_X - \varphi_Z \vert$.

\citet*{Ti80} worked with a Gaussian target $Z$ and therefore analyzed the following differential equation,
which can easily be expressed by a Tikhomirov-operator:
\begin{align*}
	\L \varphi_X (t) = t \cdot \varphi_X (t) + \varphi^\prime_X (t).
\end{align*}
This operator indeed characterizes the Gaussian distribution.
The construction of this operator can be motivated by the following considerations, which highlight the parallels to Stein's method:
Consider the functions $f_t: \R \rightarrow \C$, $x \mapsto e^{itx}$ with $t \in \R$.
Using the Stein-operator \eqref{eq:SteinOpGauss} on these functions directly leads to
\begin{align*}
	\E[\A f_t(X)] = t \cdot \varphi_X (t) + \varphi^\prime_X (t).
\end{align*}
\citet*{Ti80} analyzed this differential equation and constructed bounded functions $a$ and $b$, so that
\begin{align*}
	t \cdot \varphi_X (t) + \varphi_X^\prime (t)
	= t^2 \cdot \big( a(t) \cdot \varphi(t) + b(t) \big).
\end{align*}
He then proceeded to solve this differential equation to derive a bound for $\vert \varphi_X - \varphi_Z \vert$.
His calculation steps were later generalized by \citet*{Ro17}.
The important step can be summed up in the following lemma:
\begin{lem}\label{lem:ODEsol}
	Let $\varphi$ be the characteristic function of a random variable.
	Suppose, there are functions $a,b: \R \rightarrow \C$ and real values $A>0$, $B\geq 0$ so that
	$t \mapsto t^2 \cdot a(t)$ and $t \mapsto t^2 \cdot b(t)$ are continuous and
	\begin{flalign*}
		&&
		\varphi^\prime(t) + t \cdot \varphi(t) &= t^2 \cdot \big( a(t) \cdot \varphi(t) + b(t) \big)
		& \forall t &\in \R,\\
		&&
		\Vert a \Vert_\infty &\leq A,\\
		&&
		\Vert b \Vert_\infty &\leq B.
	\end{flalign*}
	Then
	\begin{align*}
		\Big\vert \varphi(t) - \exp\Big\{-\frac{t^2}2 \Big\} \Big\vert
		\leq{}&
		\frac{A}3 \cdot t^3 \cdot \exp\Big\{ -\frac{t^2}4 \Big\}
		+ 2Bt.
	\end{align*}
\end{lem}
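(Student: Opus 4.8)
\emph{Proof strategy.} The hypothesis is nothing but a first–order linear ODE for $\varphi$, and the plan is to solve it explicitly and then estimate the solution. Put $R(t):=\varphi(t)-e^{-t^2/2}$; since $\varphi(0)=1$ we have $R(0)=0$, and a direct computation from the assumed relation gives
\[
 R'(t)+t\,R(t)=t^2\bigl(a(t)\varphi(t)+b(t)\bigr),\qquad t\in\R .
\]
Multiplying by the integrating factor $e^{t^2/2}$ turns the left–hand side into $\frac{\dd}{\dd t}\bigl(e^{t^2/2}R(t)\bigr)$, so — using the continuity of $t\mapsto t^2a(t)$ and $t\mapsto t^2b(t)$ (and of $\varphi$) to apply the fundamental theorem of calculus —
\[
 R(t)=e^{-t^2/2}\int_0^t e^{s^2/2}\,s^2\bigl(a(s)\varphi(s)+b(s)\bigr)\,\dd s .
\]
By passing to $\overline\varphi$, which solves an equation of the same shape, it is enough to bound $|R(t)|$ for $t\ge 0$, and it remains to control the $a$– and the $b$–part of this integral separately.

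The $b$–part is the easy one: by $\Vert b\Vert_\infty\le B$ and the elementary identity $\int_0^t s^2e^{s^2/2}\,\dd s=te^{t^2/2}-\int_0^te^{s^2/2}\,\dd s\le te^{t^2/2}$ one obtains
\[
 \Bigl| e^{-t^2/2}\!\int_0^t e^{s^2/2}s^2 b(s)\,\dd s\Bigr|\le B\,t .
\]
For the $a$–part it would be tempting to use $|\varphi|\le 1$, but this only yields a term of order $A\,t$ and destroys the Gaussian decay the lemma asserts. Instead I reinsert the decomposition $\varphi(s)=e^{-s^2/2}+R(s)$ into the integral. This splits the $a$–part into a ``principal'' term
\[
 e^{-t^2/2}\int_0^t s^2 a(s)\,\dd s ,
\]
whose modulus is at most $\tfrac{A}{3}t^3e^{-t^2/2}\le\tfrac{A}{3}t^3e^{-t^2/4}$ (because $\Vert a\Vert_\infty\le A$, $\int_0^t s^2\,\dd s=t^3/3$ and $e^{-t^2/2}\le e^{-t^2/4}$), plus a remainder term $A\,e^{-t^2/2}\int_0^t e^{s^2/2}s^2|R(s)|\,\dd s$ that still involves $R$ itself.

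Handling this last, self–referential term is the technical core. Setting $v(t):=e^{t^2/2}|R(t)|$, the inequalities collected so far read $v(t)\le A\int_0^t s^2 v(s)\,\dd s+\beta(t)$ with a nondecreasing forcing term $\beta$ assembled from the principal $a$–term and the $b$–term; a Grönwall–type estimate — or, equivalently, a continuous–induction (bootstrap) argument on the largest interval on which the claimed inequality is already known, fed initially by the crude a priori bound $|R|=|\varphi-e^{-t^2/2}|\le 2$ — then swallows the remainder, the margin between $e^{-t^2/2}$ and $e^{-t^2/4}$ (and the second copy of $Bt$) being exactly what makes this possible. The outcome is $\bigl|\varphi(t)-e^{-t^2/2}\bigr|\le \tfrac{A}{3}t^3e^{-t^2/4}+2Bt$. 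Everything up to this point is routine manipulation of the explicit ODE solution; the one genuinely delicate step I anticipate is this final bookkeeping, i.e.\ verifying that the recursive contribution is absorbed with the precise constants $\tfrac{A}{3}$ and $2$ appearing in the statement rather than merely up to an unspecified multiplicative constant.
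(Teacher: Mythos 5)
Your strategy is essentially a variant of the paper's: both proofs solve the first-order linear ODE explicitly via an integrating factor. The difference is that the paper absorbs the whole coefficient $-t+t^2a(t)$ into the integrating factor, writing $\varphi(t)=\exp\{\int_0^t(-s+s^2a(s))\,\dd s\}+\int_0^t\exp\{\int_u^t(-s+s^2a(s))\,\dd s\}\,u^2b(u)\,\dd u$, so that no self-referential term ever appears and the only estimate needed is $\vert\int_u^t s^2a(s)\,\dd s\vert\le\frac{t^2}4-\frac{u^2}4$, which holds for $\vert t\vert\le\frac1{2A}$; you instead keep $t^2a(t)\varphi(t)$ as forcing, reinsert $\varphi=e^{-s^2/2}+R$, and must then absorb a Volterra term in $\vert R\vert$ by Gr\"onwall or bootstrap. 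Your preliminary steps (reduction to $t\ge0$, the bound $Bt$ for the $b$-part, the principal term $\frac A3t^3e^{-t^2/2}$) are correct.

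The gap is exactly the step you defer as ``bookkeeping'': the absorption with the precise constants $\frac A3$ and $2$ is not achievable for all $t$, and your sketch never introduces the restriction tying $t$ to $1/A$ that makes it work. Concretely, feeding the claimed bound back into your remainder term, its $B$-part equals $2ABe^{-t^2/2}\int_0^t s^3e^{s^2/2}\,\dd s=2AB\,(t^2-2+2e^{-t^2/2})$, so the total $B$-contribution is at least $Bt+2AB(t^2-2)$, which exceeds $2Bt$ as soon as $2A(t^2-2)>t$; likewise the crude Gr\"onwall factor $e^{At^3/3}$ exceeds the available margin $e^{t^2/4}$ once $t>\frac3{4A}$. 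Hence your bootstrap can only close on a range $\vert t\vert\le\frac{1}{2A}$ (roughly), which is precisely the restriction the paper's proof imposes explicitly (``if $\vert t\vert\le\frac1{2A}$'') and which is harmless downstream, since \autoref{theo:Ti80} applies the smoothing lemma with $T=\frac1{2A}$. On that range your scheme can indeed be completed (for instance $2At\le1$ turns $2ABt^2$ into $\le Bt$ and $At^3/3\le t^2/6$ preserves the $e^{-t^2/4}$ decay), but identifying and imposing this restriction is the missing idea rather than routine constant-chasing; as written, the unconditional conclusion you assert does not follow from your argument.
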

\begin{proof}
	Since $\varphi$ is a characteristic function, we know that $\varphi(0)=1$.
	Therefore, there is a unique solution to the differential equation
	\begin{flalign*}
		&&
		\varphi^\prime(t) + t \cdot \varphi(t) &= t^2 \cdot \big( a(t) \cdot \varphi(t) + b(t) \big)
		& \forall t &\in \R.
	\end{flalign*}
	We get $\forall t \in \R$
	\begin{align*}
		\varphi(t)
		={}&
		\exp\Big\{ \int_0^t (-s + s^2 \cdot a(s) ) \,\dd s \Big\}
		+ \int_0^t \exp \Big\{ \int_u^t (-s + s^2 \cdot a(s) ) \,\dd s \Big\} \cdot u^2 \cdot b(u) \,\dd u\\
		={}&
		\exp\Big\{-\frac{t^2}2 + \int_0^t s^2 \cdot a(s) \,\dd s \Big\}
		+ \int_0^t \exp\Big\{ -\frac{t^2}2 + \frac{u^2}2 + \int_u^t s^2 \cdot a(s) \,\dd s \Big\} \cdot u^2 \cdot b(u) \,\dd u.
	\end{align*}
	The first summand can be expressed by
	\begin{align*}
		&\exp\Big\{-\frac{t^2}2 + \int_0^t s^2 \cdot a(s) \,\dd s \Big\} \\
		={}&
		\exp\Big\{-\frac{t^2}2 + \int_0^t s^2 \cdot a(s) \,\dd s \Big\} - \exp\Big\{-\frac{t^2}2 \Big\} + \exp\Big\{-\frac{t^2}2 \Big\} \\
		={}&
		\int_0^t s^2 \cdot a(s) \,\dd s \cdot \int_0^1 \exp\Big\{ -\frac{t^2}2 + u \cdot \int_0^t s^2 \cdot a(s) \,\dd s \Big\} \,\dd u
		+ \exp\Big\{-\frac{t^2}2 \Big\}.
	\end{align*}
	This leads to
	\begin{align*}
		\varphi(t) - \exp\Big\{-\frac{t^2}2 \Big\}
		={}&
		\int_0^t s^2 \cdot a(s) \,\dd s \cdot \int_0^1 \exp\Big\{ -\frac{t^2}2 + u \cdot \int_0^t s^2 \cdot a(s) \,\dd s \Big\} \,\dd u \\
		&+
		\int_0^t \exp\Big\{ -\frac{t^2}2 + \frac{u^2}2 + \int_u^t s^2 \cdot a(s) \,\dd s \Big\} \cdot u^2 \cdot b(u) \,\dd u.
	\end{align*}
	If $\vert t \vert \leq \frac1{2A}$ and $u \in [0,t]$ respectively $u \in [t,0]$, then
	\begin{align*}
		\Big\vert \int_u^t s^2 \cdot a(s) \,\dd s \Big\vert
		&\leq \int_{\vert u \vert}^{\vert t \vert} \frac{s}{2A} \cdot \vert a(s) \vert \,\dd s
		\leq \frac{t^2}4 - \frac{u^2}4,
	\end{align*}
	so that
	\begin{align*}
		&\Big\vert \varphi(t) - \exp\Big\{-\frac{t^2}2 \Big\} \Big\vert\\
		\leq{}&
		\frac{A}3 \cdot t^3 \cdot \int_0^1 \exp\Big\{ -\frac{t^2}2 + u \cdot \frac{t^2}4 \Big\} \,\dd u
		+
		\int_0^t \exp\Big\{ -\frac{t^2}2 + \frac{u^2}2 + \frac{t^2}4 - \frac{u^2}4 \Big\} \cdot u^2 \cdot B \,\dd u \\
		\leq{}&
		\frac{A}3 \cdot t^3 \cdot \exp\Big\{ -\frac{t^2}4 \Big\}
		+
		B \cdot \exp\Big\{ -\frac{t^2}4 \Big\} \cdot \int_0^t \exp\Big\{ \frac{u^2}4 \Big\} \cdot u^2 \,\dd u \\
		\leq{}&
		\frac{A}3 \cdot t^3 \cdot \exp\Big\{ -\frac{t^2}4 \Big\}
		+ 2Bt.
		\qedhere
	\end{align*}
\end{proof}
Note that if the functions $a$ and $b$ are not continuous, but if at least all the integrals exist,
then the result of \autoref{lem:ODEsol} is valid at least a.~e., which is sufficient for the next step.
Though, in the setting of our paper, it will be easy to show that $a$ and $b$ are continuous.
Since $t \mapsto \exp\{-\frac{t^2}2\}$, $t \in \R$, is the characteristic function of a Gaussian random variable,
by plugging the results of \autoref{lem:ODEsol} into the smoothing-lemma~(\autoref{theo:BE}), we finally get:
\begin{theo}[see Lemma~2.3 in \cite{Ro17}]\label{theo:Ti80}
	Let $\varphi$ be the characteristic function of a random variable.
	Suppose, there are functions $a,b: \R \rightarrow \C$ and real values $A>0$, $B\geq 0$ so that
	$t \mapsto t^2 \cdot a(t)$ and $t \mapsto t^2 \cdot b(t)$ are continuous and
	\begin{flalign*}
		&&
		\varphi^\prime(t) + t \cdot \varphi(t) &= t^2 \cdot \big( a(t) \cdot \varphi(t) + b(t) \big)
		& \forall t &\in \R,\\
		&&
		\Vert a \Vert_\infty &\leq A,\\
		&&
		\Vert b \Vert_\infty &\leq B.
	\end{flalign*}
	Then
	\begin{align*}
		\Kol(\L(W), \Normal(0,1))
		&\leq
		\Big( \frac{4}{3\sqrt\pi} + \frac{24\sqrt2}{\pi\sqrt{\pi}} \Big) \cdot A + \frac2\pi \cdot \frac{B}{A}.
	\end{align*}
\end{theo}

\citet*{AMPS17} describe, how to construct Tikhomirov-operators for a larger class of target distributions.
In particular, they describe ways of constructing such an operator without the need of an already existing Stein-operator.
They also show how to solve the resulting differential equations in the case,
that the upper bound for $\L\varphi_X$ does not include $\varphi_X$ itself.

\subsection{Results}\label{sub:RM}
Now, we can apply \autoref{theo:Ti80} to the context described at the end of the introduction:
\begin{cor}\label{theo:STMresult}
	Let $W$ be an integrable random variable with characteristic function $\varphi$.
	Suppose, there is a stochastic process $(H_t)_{t\in\R}$ so that there is $\forall t \in \R$
	\begin{align*}
		\varphi^\prime(t) + t \cdot \varphi(t) &= t^2 \cdot \E[ H_t \cdot e^{itW} ]
	\end{align*}
	and $H_t$ is integrable.
	If $t \mapsto t^2 \cdot \E[H_t]$ and $t \mapsto t^2 \cdot \Cov(H_t, e^{itW} )$ are continuous and
	if $\vert \E[ H_t ] \vert \leq A$ and $\vert \Cov( H_t  , e^{-itW}) \vert \leq B$ $\forall t \in \R$
	for some real values $A>0$, $B\geq 0$, then
	\begin{align*}
		\Kol(\L(W), \Normal(0,1))
		&\leq
		\Big( \frac4{3\sqrt\pi} + \frac{24 \sqrt2}{\pi\sqrt{\pi}} \Big) \cdot A + \frac2\pi \cdot \frac{B}{A}.
	\end{align*}
\end{cor}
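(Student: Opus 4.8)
The plan is to read \autoref{theo:STMresult} as a direct translation of its hypotheses into those of \autoref{theo:Ti80}: once the expectation $\E[H_t e^{itW}]$ is split into a product term and a centered term, the differential identity required by \autoref{theo:Ti80} holds with $a(t) := \E[H_t]$ and $b(t) := \Cov(H_t, e^{-itW})$, and the remaining work is only the verification of continuity and boundedness.

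First I would note that, since $W$ is integrable, its characteristic function $\varphi$ is continuously differentiable with $\varphi^\prime(t) = \E[iW e^{itW}]$. Hence, for all $t \in \R$,
\begin{align*}
	\varphi^\prime(t) + t \cdot \varphi(t) = \E\big[(iW + t) e^{itW}\big] = t^2 \cdot \E\big[H_t \cdot e^{itW}\big],
\end{align*}
the last step being the assumed representation. Centering $H_t$ and $e^{itW}$ exactly as in the computation in \autoref{sec:Intro} gives
\begin{align*}
	\E\big[H_t \cdot e^{itW}\big] = \E[H_t] \cdot \varphi(t) + \E\big[(H_t - \E[H_t])(e^{itW} - \E[e^{itW}])\big] = a(t) \cdot \varphi(t) + b(t),
\end{align*}
so that $\varphi^\prime(t) + t \cdot \varphi(t) = t^2 \big(a(t) \cdot \varphi(t) + b(t)\big)$ for all $t \in \R$, which is precisely the differential equation appearing in \autoref{theo:Ti80}.

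It then remains to check the regularity and size conditions. The map $t \mapsto t^2 \cdot a(t) = t^2 \cdot \E[H_t]$ is continuous by hypothesis, and $t \mapsto t^2 \cdot b(t)$ is continuous because it differs from the assumed-continuous map $t \mapsto t^2 \cdot \Cov(H_t, e^{itW})$ only by complex conjugation (equivalently, by the sign change $t \mapsto -t$ in the oscillating factor), which preserves continuity. The bounds $\Vert a \Vert_\infty \leq A$ and $\Vert b \Vert_\infty \leq B$ hold by assumption, with $A > 0$ and $B \geq 0$. Feeding $a$, $b$, $A$, $B$ into \autoref{theo:Ti80} yields
\begin{align*}
	\Kol(\L(W), \Normal(0,1)) \leq \Big( \frac{4}{3\sqrt\pi} + \frac{24\sqrt2}{\pi\sqrt{\pi}} \Big) \cdot A + \frac2\pi \cdot \frac{B}{A},
\end{align*}
which is the asserted bound. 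I do not expect a genuine obstacle here: the statement is essentially a repackaging of \autoref{theo:Ti80}, and the only points needing a little care are the centering identity that isolates the covariance term $b(t)$, the bookkeeping between $e^{itW}$ and $e^{-itW}$ in the continuity hypothesis, and the appeal to integrability of $W$ that justifies differentiating $\varphi$ under the expectation.
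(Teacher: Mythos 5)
Your proposal is correct and follows essentially the same route as the paper: the centering identity from the introduction gives $a(t)=\E[H_t]$ and $b(t)=\Cov(H_t,e^{-itW})$, and the bound is then exactly \autoref{theo:Ti80} applied to these choices. The only cosmetic point is the $e^{itW}$ versus $e^{-itW}$ bookkeeping in the continuity hypothesis, which you handle adequately and which in any case is automatic, since $t^2 b(t)=\varphi^\prime(t)+t\varphi(t)-t^2 a(t)\varphi(t)$ is continuous as soon as $t\mapsto t^2\E[H_t]$ is.
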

In our application of \autoref{theo:STMresult} $\varphi$ will depend on a natural number $n \in \N$.
Therefore, $A$ and $B$ will also depend on $n$.
In most cases, they will converge to $0$ as $n \rightarrow \infty$.
We want the upper bound from \autoref{theo:STMresult} to also converge to $0$.
This is the case if $B = o(A)$.
Ideally, there is $B \approx A^2$.
It is also possible to use $1$ as bound for $\varphi$ in \autoref{lem:ODEsol}.
In this case, however, after some short computations, we finally get an upper bound for the Kolmogorov distance of order $\sqrt{A+B}$,
which is a weaker upper bound than the one in \autoref{theo:STMresult} in case of $B \approx A^2$.

\section{Barbour-Karo\'nski-Ruci\'nski-Decomposition}\label{sec:BKR}

Here, we introduce random variables posessing decompositions of \citet*{BKR89}, which are particularly useful in combinatorial structures, where
there is no natural ordering of the summands.  

\begin{defn}
Suppose that $W$ is a random variable decomposed in the following way: 
Let $J$ be a finite index set.
For $j \in J$ let $N_j$ be a subset of $J$.
Further, let
$\{ X_j \}_{j \in J}$,
$\{ W_j \}_{j \in J}$,
$\{ Z_j \}_{j \in J}$,
$\{ Z_{jk} \}_{j \in J, k \in N_j}$,
$\{ W_{jk} \}_{j \in J, k \in N_j}$,
$\{ V_{jk} \}_{j \in J, k \in N_j}$
and
$W$
be square integrable random variables, so that
\begin{align*}
	W &= \sum_{j \in J} X_j, &
	\E[X_j] &= 0 \;\forall j \in J, &
	\E[W^2] &= 1, \\
	W &= W_j + Z_j \;\forall j \in J, &
	Z_j &= \sum_{k \in N_j} Z_{jk}, \\
	W_j &= W_{jk} + V_{jk} \;\forall j \in J, k \in N_j.
\end{align*}
Suppose that for $j \in J, k\in N_j$
\begin{itemize}
	\item $W_j$ is independent of $X_j$,
	\item $W_{jk}$ is independent of $(X_j, Z_{jk})$.
\end{itemize}
We call $W$ to be {\bf BKR-decomposable} in this case. Let $\varphi$ be the characteristic function of $W$.
\end{defn}

Examples of decomposable random variables are presented in \cite{BKR89} and \cite{R03}. They include the notion of finite dependence used
by Chen in \cite{Chen78}, as well as the class of dissociated random variables introduced by McGinley and Sibson \cite{McGSib75}.
In the model of an Erd\"os-R\'enyi random graph, the number of copies of a given graph, the number of induced copies of this graph, the number of isolated trees of order $k \geq 2$, the number of vertices of degree $k \geq 1$ and
the number of isolated vertices are examples. Moreover Nash equilibria and linear rank statistics can be BKR-decomposed, see \cite{R03}.

\citet*{BKR89} show, that under these assumptions, there is a universal constant $C \in \R$, so that
$\Wass(\L(W), \Normal(0,1)) \leq C \epsilon$
with
\begin{align}
	\epsilon
	&=
	\frac12 \sum_{j \in J} \E[ \vert X_j \vert Z_j^2 ]
	+
	\sum_{j \in J} \sum_{k \in N_j} \big( \E[ \vert X_j Z_{jk} V_{jk} \vert ] + \E[ \vert X_j Z_{jk} \vert ] \E[ \vert Z_j + V_{jk} \vert ] \big)
	\label{BKR-epsilon}
\end{align}
(see Theorem 1 in \cite{BKR89}).

In this section, we want to derive a similar bound for the Kolmogorov distance.
To achieve this, we will use the results of the previous section.
Therefore, we need to construct a stochastic process $(H_t)_{t\in\R}$ with the properties described in \autoref{theo:STMresult}.
Especially, we need to find bounds for $\vert \E[ H_t ] \vert$ and $\vert \Cov( H_t  , e^{-itW}) \vert$.\\
For the calculations in this section, the following class of functions will be useful:
For $l \in \N$ define $R_l : \R \longrightarrow \C$ with
\begin{align}
	R_l(z) = \sum_{m=0}^\infty \frac{(iz)^m}{(m+l)!}.
	\label{defn:R}
\end{align}
The following properties are easy to prove. We will therefore omit the proofs.
\begin{rem}\label{R-properties}
	$\forall z \in \R, l \in \N$ it holds, that
	\begin{align*}
		&&\text{(i)}&&	e^{iz} &= \sum_{m=0}^{l-1} \frac{(iz)^m}{m!} + (iz)^l \cdot R_l(z),&&&&\\
		&&\text{(ii)}&&	1 &= e^{iz} - iz R_1(z),\\
		&&\text{(iii)}&&	1 &= e^{iz} - iz + z^2 R_2(z),\\
		&&\text{(iv)}&&	\Vert R_l \Vert_\infty &= R_l(0) = \frac1{l!}.
	\end{align*}
\end{rem}

Now the stochastic process $(H_t)_{t\in\R}$ can be introduced. Let $W$ be BKR-decomposable. The corresponding process $H_t$ 
is given by 
\begin{align} \label{Ht}
	H_t :={}& i\sum_{j \in J} X_j Z_j^2 R_2(-tZ_j)
			-i \sum_{j \in J} \sum_{k \in N_j}\big( X_j Z_{jk} - \E[X_j Z_{jk}] \big)
									(Z_j + V_{jk}) R_1(-t(Z_j + V_{jk})).
\end{align}

The following \autoref{BKR-Ht} shows that this definition of  $(H_t)_{t\in\R}$ indeed is consistent
with the assumptions made in the introduction.

\begin{lem}\label{BKR-Ht}
Let $W$ be BKR-decomposable.  For all $t \in \R$ it holds that
	\begin{align*}
		\E[ (iW+t) \cdot e^{itW} ] &= \E[ t^2 \cdot H_t \cdot e^{itW} ].
	\end{align*}
	Here, $H_t$ is defined in \eqref{Ht}.
\end{lem}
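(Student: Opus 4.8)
The plan is to compute $\E[t^2 \cdot H_t \cdot e^{itW}]$ directly from the definition \eqref{Ht}, splitting it into the two sums, and to show that each sum reproduces the corresponding part of $\E[(iW+t)e^{itW}]$. I would start from the identity $\E[(iW+t)e^{itW}] = \sum_{j\in J}\E[(iX_j + \tfrac{t}{|J|})e^{itW}]$ — or more precisely, since $W = \sum_j X_j$ and $\E[X_j]=0$, rewrite $iW + t = \sum_{j\in J} iX_j + t$ and handle the ``$t$'' term via $\E[X_j]=0$ after a Taylor expansion. The key algebraic engine will be \autoref{R-properties}: for the first sum I would use (iii) in the form $e^{-itZ_j} = 1 + itZ_j + (itZ_j)^2 R_2(-tZ_j)$ applied to the decomposition $e^{itW} = e^{itW_j} e^{itZ_j}$, and for the second sum I would use (ii) in the form $e^{-it(Z_j+V_{jk})} = 1 + it(Z_j+V_{jk})R_1(-t(Z_j+V_{jk}))$ applied to $e^{itW_j} = e^{itW_{jk}} e^{it(Z_j+V_{jk})-itZ_j}$, i.e. to the inner decomposition $W_j = W_{jk} + V_{jk}$.

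Concretely, for a fixed $j$ I would write $\E[iX_j e^{itW}] = \E[iX_j e^{itW_j} e^{itZ_j}]$ and expand $e^{itZ_j}$ to second order; the zeroth-order term $\E[iX_j e^{itW_j}]$ vanishes because $W_j$ is independent of $X_j$ and $\E[X_j]=0$; the first-order term $\E[iX_j\cdot itZ_j \cdot e^{itW_j}] = -t\,\E[X_j Z_j e^{itW_j}]$ must then be matched by the contribution of the ``$t$''-term and by the second sum. Here is where the second sum enters: I would further decompose $Z_j = \sum_{k\in N_j} Z_{jk}$ and, for each $k$, expand $e^{itW_j} = e^{itW_{jk}}e^{itV_{jk}}$ using (ii), exploiting that $W_{jk}$ is independent of $(X_j, Z_{jk})$ to kill the leading term $\E[X_j Z_{jk} e^{itW_{jk}}] = \E[X_j Z_{jk}]\,\E[e^{itW_{jk}}]$, which is precisely the centering $X_j Z_{jk} - \E[X_j Z_{jk}]$ appearing in \eqref{Ht}. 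Tracking the residual $R_1$- and $R_2$-terms and the bookkeeping of which variable multiplies which exponential should, after reassembling over $j$ and $k$, land exactly on $\E[t^2 H_t e^{itW}]$.

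The main obstacle will be purely organizational rather than conceptual: keeping the exponential factors correctly decomposed so that each independence assumption is invoked against the right conditioning, and making sure the $t$-term $\E[t\cdot e^{itW}]$ is correctly distributed across the $j$-sum and cancels against the first-order Taylor terms without leaving spurious remainders. A secondary point to handle carefully is that the $R_l$ are complex-valued and the manipulations involve $e^{itW}$ rather than real quantities, so I would phrase everything in terms of the exact identities in \autoref{R-properties} (which hold as equalities, not just bounds) and avoid any estimation at this stage — the bound comes later, this lemma is an exact identity. One should also verify integrability of each term so that all expectations and the rearrangements are legitimate, which follows from square-integrability of all the listed variables together with $\Vert R_l\Vert_\infty = 1/l!$ from \autoref{R-properties}(iv) and Cauchy--Schwarz.
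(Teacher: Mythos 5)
Your plan is in the right circle of ideas (Taylor expansion with the $R_l$ remainders, independence to remove leading terms, $1=\E[W^2]=\sum_{j,k}\E[X_jZ_{jk}]$ for the $t$-term), but the way you organize the expansions does not land on the specific $H_t$ of \eqref{Ht}, and this is a genuine gap, not just bookkeeping. You expand the \emph{forward} increments against the reduced exponentials: $e^{itW}=e^{itW_j}e^{itZ_j}$ with $e^{itZ_j}$ expanded to second order, and then $e^{itW_j}=e^{itW_{jk}}e^{itV_{jk}}$ with $e^{itV_{jk}}$ expanded to first order. Carrying this out gives a perfectly valid identity, but its remainder terms are of the form $-iX_jZ_j^2R_2(tZ_j)e^{itW_j}$, $-iX_jZ_{jk}V_{jk}R_1(tV_{jk})e^{itW_{jk}}$ and (after comparing $\E[e^{itW}]$ with $\E[e^{itW_{jk}}]$ in the $t$-term) $+i\E[X_jZ_{jk}](Z_j+V_{jk})R_1(t(Z_j+V_{jk}))e^{itW_{jk}}$. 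These are not the terms of \eqref{Ht}: the $R$-arguments carry the opposite sign, the remainders are multiplied by $e^{itW_j}$ resp.\ $e^{itW_{jk}}$ instead of the common factor $e^{itW}$, the second term involves $V_{jk}R_1(tV_{jk})$ rather than $(Z_j+V_{jk})R_1(-t(Z_j+V_{jk}))$, and consequently the $X_jZ_{jk}$-part and the $\E[X_jZ_{jk}]$-part have different $R_1$-arguments, so the centered factor $\big(X_jZ_{jk}-\E[X_jZ_{jk}]\big)$ of \eqref{Ht} never forms. One can check directly that, e.g., $Z_j^2\big(R_2(-tZ_j)e^{itZ_j}+R_2(tZ_j)\big)=iZ_j(1-e^{itZ_j})/t$, so the two versions of the remainder differ by exactly the discrepancy between the first-order terms $-t\,\E[X_jZ_je^{itW}]$ and $-t\,\E[X_jZ_je^{itW_j}]$; your route therefore proves a \emph{different} identity, and deducing the stated one from it amounts to redoing the computation the other way. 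The retained factor $e^{itW}$ is not cosmetic: it is exactly what the subsequent step $\E[H_te^{itW}]=\E[H_t]\varphi(t)+\Cov(H_t,e^{-itW})$ needs.

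The fix is to run the expansions the way the paper does: insert decompositions of $1$ (\autoref{R-properties}(ii),(iii) evaluated at $z=-tZ_j$ and $z=-t(Z_j+V_{jk})$, note also the signs: $1=e^{-itZ_j}+itZ_jR_1(-tZ_j)$ etc., your quoted forms have sign slips) next to the \emph{full} $e^{itW}$, so every remainder keeps $e^{itW}$ and only the leading terms collapse to $e^{itW_j}$ or $e^{itW_{jk}}$; and at the second stage use the increment $Z_j+V_{jk}$ (connecting $W$ to $W_{jk}$) for \emph{both} the $-t\sum X_jZ_{jk}e^{itW}$ term and the $t\sum\E[X_jZ_{jk}]e^{itW}$ term, which is what makes the centered product in \eqref{Ht} appear and lets the leading terms vanish in expectation via $\E[X_j]=0$, independence of $X_j$ and $W_j$, and independence of $(X_j,Z_{jk})$ and $W_{jk}$ (your phrase that independence ``kills'' $\E[X_jZ_{jk}e^{itW_{jk}}]$ is inaccurate: that term factorizes but is nonzero, and cancels only against the correspondingly expanded $t$-term). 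Finally, your integrability remark is too quick: square-integrability plus $\Vert R_l\Vert_\infty=1/l!$ and Cauchy--Schwarz does not make triple products like $X_jZ_j^2$ integrable; the clean argument is that the proof yields a pointwise identity expressing $t^2H_te^{itW}$ as a linear combination of manifestly integrable terms, so the expectations exist and the claimed equality follows (integrability of $H_0$ itself is a separate matter, cf.\ \autoref{W3}).
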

\begin{proof}
	The strategy of this proof relies on a combination of strategies by
	\cite{BKR89}
	and
	\cite{Ro17}.
	It consists of several decompositions of $1$ together with Taylor expansions.
	First, we find
	\begin{align}
		iW e^{itW}
		&=
		iW e^{itW} \cdot 1 \nonumber\\
		&=
		i\sum_{j \in J} X_j e^{itW} \big( e^{-itZ_j} - i (-tZ_j) + t^2 Z_j^2 R_2(-tZ_j) \big) \nonumber\\
		&=
		i\sum_{j \in J} X_j e^{itW_j} - t \sum_{j \in J}X_j Z_j e^{itW} + it^2 \sum_{j \in J} X_j Z_j^2 R_2(-tZ_j) e^{itW} \label{eq:1}
		.
	\end{align}
	Similar calculation steps are applied to the second summand of \eqref{eq:1}:
	\begin{align}
		&-t \sum_{j \in J} X_j Z_j e^{itW} \nonumber\\
		={}&
		-t \sum_{j \in J} \sum_{k \in N_j} X_j Z_{jk} e^{itW} \cdot 1 \nonumber\\
		={}&
		-t \sum_{j \in J} \sum_{k \in N_j} X_j Z_{jk} e^{itW} \big( e^{-it(Z_j + V_{jk})} + it(Z_j + V_{jk}) R_1(-t(Z_j + V_{jk})) \big) \nonumber\\
		={}&
		-t \sum_{j \in J} \sum_{k \in N_j} X_j Z_{jk} e^{itW_{jk}}
			- it^2 \sum_{j \in J} \sum_{k \in N_j} X_j Z_{jk} (Z_j + V_{jk}) R_1(-t(Z_j + V_{jk})) e^{itW} \label{eq:2}
		.
	\end{align}
	It remains to analyze $t e^{itW}$:
	Using the decomposition
	\begin{align*}
		1 &= \E[W^2]
		= \sum_{j \in J} \E[X_j (W_j + Z_j) ]
		= \sum_{j \in J} \E[X_j] \E[W_j] + \sum_{j \in J} \sum_{k \in N_j} \E[X_j Z_{jk} ]\\
		&= \sum_{j \in J} \sum_{k \in N_j} \E[X_j Z_{jk} ]
	\end{align*}
	we find
	\begin{align}
		t e^{itW}
		={}& t \sum_{j \in J} \sum_{k \in N_j} \E[X_j Z_{jk} ] e^{itW} \cdot 1 \nonumber\\
		={}& t \sum_{j \in J} \sum_{k \in N_j} \E[X_j Z_{jk} ] e^{itW}
									\big( e^{-it(Z_j + V_{jk})} + it(Z_j + V_{jk}) R_1(-t(Z_j + V_{jk})) \big) \nonumber\\
		={}& t \sum_{j \in J} \sum_{k \in N_j} \E[X_j Z_{jk} ] e^{itW_{jk}} \nonumber\\
			&+ it^2 \sum_{j \in J} \sum_{k \in N_j} \E[X_j Z_{jk} ](Z_j + V_{jk}) R_1(-t(Z_j + V_{jk})) e^{itW} \label{eq:3}
		.
	\end{align}
	All previous results \eqref{eq:1}, \eqref{eq:2}, \eqref{eq:3} together lead to
	\begin{align*}
		& (iW+t) \cdot e^{itW}\\
		={}&
		i\sum_{j \in J} X_j e^{itW_j}
		- t \sum_{j \in J} \sum_{k \in N_j} X_j Z_{jk} e^{itW_{jk}}\\
		&+ it^2 \sum_{j \in J} \sum_{k \in N_j} X_j Z_{jk} (Z_j + V_{jk}) R_1(-t(Z_j + V_{jk})) e^{itW}
		- it^2 \sum_{j \in J} X_j Z_j^2 R_2(-tZ_j) e^{itW}\\
		&+ t \sum_{j \in J} \sum_{k \in N_j} \E[X_j Z_{jk} ] e^{itW_{jk}}
		+ it^2 \sum_{j \in J} \sum_{k \in N_j} \E[X_j Z_{jk} ](Z_j + V_{jk}) R_1(-t(Z_j + V_{jk})) e^{itW}\\
		={}&
		i\sum_{j \in J} X_j e^{itW_j}
		- t \sum_{j \in J} \sum_{k \in N_j} \big( X_j Z_{jk} - \E[X_j Z_{jk}] \big) e^{itW_{jk}}
		+ t^2 \cdot H_t \cdot e^{itW}
		.
	\end{align*}
	Since $X_j$ is centered and $X_j$ and $W_j$ respectively $X_jZ_{jk}$ and $W_{jk}$ are independent,
	we get the result $\E[ (iW+t) \cdot e^{itW} ] = \E[ t^2 \cdot H_t \cdot e^{itW} ]$.
\end{proof}

We are now able to show the current section's main result:
\begin{theo}\label{BKR-Schranken}
Let $W$ be BKR-decomposable, then for all $t\in\R$ it holds that
	\begin{align*}
	\E[ \vert H_t \vert ]
		\leq{}& \frac12 \sum_{j \in J} \E[ \vert X_j \vert Z_j^2 ] \\
			&+ \sum_{j \in J} \sum_{k \in N_j}
				\big( \E[ \vert X_j Z_{jk} (Z_j + V_{jk}) \vert ] + \E[ \vert X_j Z_{jk} \vert ] \E[ \vert Z_j + V_{jk} \vert ] \big),\\
	\vert \Cov( H_t  , e^{-itW}) \vert
		\leq{}& \Big( \Var\Big(\sum_{j \in J} X_j Z_j^2 R_2(-tZ_j) \Big) \Big)^{1/2}\\
			& + \Big( \Var\Big(\sum_{j \in J} \sum_{k \in N_j} X_j Z_{jk} (Z_j + V_{jk}) R_1(-t(Z_j + V_{jk})) \Big) \Big)^{1/2}\\
			& + \Big( \Var\Big(\sum_{j \in J} \sum_{k \in N_j} \E[ X_j Z_{jk} ] (Z_j + V_{jk}) R_1(-t(Z_j + V_{jk})) \Big) \Big)^{1/2},
	\end{align*}
	where $R_1$ and $R_2$ are defined in \eqref{defn:R} and $H_t$ is defined in \eqref{Ht}.
\end{theo}
Note, that the upper bound for $\E[ \vert H_t \vert ]$ in \autoref{BKR-Schranken}
is very similar but not identical to \eqref{BKR-epsilon}, which is the bound used in \cite{BKR89}.
\begin{proof}[Proof of \autoref{BKR-Schranken}]
	The bound for $\E[ \vert H_t \vert ]$ is a direct consequence of the definition of $H_t$
	and the properties of $R_1$ and $R_2$ from \autoref{R-properties}.
	It remains to prove the bound for $\Cov(H_t , e^{-itW})$.
	Since the covariance is a sesquilinear form, we get
	\begin{align*}
		&\Cov( H_t , e^{-itW})\\
		={}& i \Cov\Big( \sum_{j \in J} X_j Z_j^2 R_2(-tZ_j), e^{-itW} \Big)\\
			& - i \Cov\Big( \sum_{j \in J} \sum_{k \in N_j}
						X_j Z_{jk} (Z_j + V_{jk}) R_1(-t(Z_j + V_{jk}))
								 , e^{-itW} \Big)\\
			& + i \Cov\Big( \sum_{j \in J} \sum_{k \in N_j}
						\E[ X_j Z_{jk} ] (Z_j + V_{jk}) R_1(-t(Z_j + V_{jk}))
								 , e^{-itW} \Big)
		.
	\end{align*}
	Now, note that $\Var(e^{itW}) \leq 1$.
	Application of the Cauchy-Schwarz inequality therefore yields the desired result.
\end{proof}
The purpose of \autoref{BKR-Schranken} is to plug these upper bounds into \autoref{theo:STMresult}.
Therefore, we need to make sure that the assumptions of \autoref{theo:STMresult} are fulfilled.
It remains to check
whether $H_t$ is integrable $\forall t \in \R$
and whether the mappings
$t \mapsto t^2 \cdot \E[H_t]$ and $t \mapsto t^2 \cdot \Cov(H_t , e^{-itW})$, $t \in \R$,
are continuous.

The integrability of $H_t$ for $t \neq 0$ already follows from the proof of the preceeding \autoref{BKR-Ht}.
The integrability of $H_0$ though does not follow from this lemma.
However, with similar arguments one can show,
that the integrability of $H_0$ is equivalent to the integrability of $W^3$, which is proved below in \autoref{W3}.
As already stated, the continuity of
$t \mapsto t^2 \cdot \E[H_t]$ and $t \mapsto t^2 \cdot \Cov(H_t , e^{-itW})$, $t \in \R$,
is not necessary.
So the existence of the third moment of $W$ would be a sufficient condition
to use \autoref{theo:STMresult}.\\
However, we want to note that there is a simpler way to make sure that \autoref{theo:STMresult} may be used:
In this section's main result, \autoref{BKR-Schranken}, the expectation of the quantity
\begin{align}
	\frac12 \sum_{j \in J} \vert X_j \vert Z_j^2
		+ \sum_{j \in J} \sum_{k \in N_j}
			\big( \vert X_j Z_{jk} (Z_j + V_{jk}) \vert + \E[ \vert X_j Z_{jk} \vert ] \vert Z_j + V_{jk} \vert \big)
	\label{eq:upperbound}
\end{align}
appears as an upper bound.
In cases, in which this expectation does not exist, all results in this paper are trivial.
So from now on we may assume without loss of generality, that the expectation of \eqref{eq:upperbound} exists.
However, \eqref{eq:upperbound} is an uniform bound for $H_t$, $t \in \R$.
Therefore, all $H_t$ are integrable. Further, by dominated convergence the mappings
$t \mapsto t^2 \cdot \E[H_t]$ and $t \mapsto t^2 \cdot \Cov(H_t , e^{-itW})$, $t \in \R$,
are continuous.
Thus, all assumptions from \autoref{theo:STMresult} are fulfilled.

\begin{prop}\label{W3}
Let $W$ be BKR-decomposable. Then $H_0$, as defined in \eqref{Ht}, is integrable, if and only if $W^3$ is integrable.
\end{prop}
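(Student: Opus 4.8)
The plan is to compute $H_{0}$ in closed form and match it, summand by summand, against the expansion of $W^{3}$ produced by the BKR-decomposition; the only genuinely problematic products will turn out to appear in both quantities, which forces the equivalence. First I would specialise \eqref{Ht} to $t=0$: since $R_{1}(0)=1$ and $R_{2}(0)=\tfrac12$ by \autoref{R-properties}(iv),
\begin{align*}
	H_{0}=\frac{i}{2}\sum_{j\in J}X_{j}Z_{j}^{2}-i\sum_{j\in J}\sum_{k\in N_{j}}\bigl(X_{j}Z_{jk}-\E[X_{j}Z_{jk}]\bigr)(Z_{j}+V_{jk}).
\end{align*}
Using $Z_{j}=\sum_{k\in N_{j}}Z_{jk}$ to turn $\sum_{k}X_{j}Z_{jk}Z_{j}$ into $X_{j}Z_{j}^{2}$ and collecting the constant factors $\E[X_{j}Z_{jk}]$ (with $\sum_{k}\E[X_{j}Z_{jk}]=\E[X_{j}Z_{j}]$), this becomes
\begin{align*}
	H_{0}=-\frac{i}{2}\sum_{j\in J}X_{j}Z_{j}^{2}-i\sum_{j\in J}\sum_{k\in N_{j}}X_{j}Z_{jk}V_{jk}+R,\qquad R:=i\sum_{j\in J}Z_{j}\,\E[X_{j}Z_{j}]+i\sum_{j\in J}\sum_{k\in N_{j}}\E[X_{j}Z_{jk}]\,V_{jk}.
\end{align*}
Because $J$ is finite, every $Z_{j}$ and $V_{jk}$ is square integrable and every $\E[X_{j}Z_{j}]$, $\E[X_{j}Z_{jk}]$ is finite (Cauchy--Schwarz), so $R$ is integrable. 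Hence $H_{0}$ is integrable if and only if $\sum_{j\in J}X_{j}Z_{j}^{2}+2\sum_{j\in J}\sum_{k\in N_{j}}X_{j}Z_{jk}V_{jk}$ is.

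Next I would expand $W^{3}$. From $W=\sum_{j}X_{j}$, $W=W_{j}+Z_{j}$ and $X_{j}W_{j}Z_{j}=\sum_{k\in N_{j}}X_{j}Z_{jk}W_{j}=\sum_{k\in N_{j}}X_{j}Z_{jk}(W_{jk}+V_{jk})$,
\begin{align*}
	W^{3}=\sum_{j\in J}X_{j}(W_{j}+Z_{j})^{2}=\sum_{j\in J}X_{j}W_{j}^{2}+2\sum_{j\in J}\sum_{k\in N_{j}}X_{j}Z_{jk}W_{jk}+\Bigl(\sum_{j\in J}X_{j}Z_{j}^{2}+2\sum_{j\in J}\sum_{k\in N_{j}}X_{j}Z_{jk}V_{jk}\Bigr).
\end{align*}
Here the independence hypotheses enter: since $X_{j}$ is independent of $W_{j}$, one has $\E[\,|X_{j}|W_{j}^{2}\,]=\E[|X_{j}|]\,\E[W_{j}^{2}]<\infty$, and since $W_{jk}$ is independent of $(X_{j},Z_{jk})$, one has $\E[\,|X_{j}Z_{jk}W_{jk}|\,]=\E[|X_{j}Z_{jk}|]\,\E[|W_{jk}|]<\infty$ (the first factor finite by Cauchy--Schwarz). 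As $J$ and every $N_{j}$ are finite, the first two sums on the right are integrable, so $W^{3}$ is integrable if and only if $\sum_{j\in J}X_{j}Z_{j}^{2}+2\sum_{j\in J}\sum_{k\in N_{j}}X_{j}Z_{jk}V_{jk}$ is.

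Comparing the two reductions gives the claim: $H_{0}$ and $W^{3}$ are integrable precisely when the single quantity $\sum_{j\in J}X_{j}Z_{j}^{2}+2\sum_{j\in J}\sum_{k\in N_{j}}X_{j}Z_{jk}V_{jk}$ is, and in fact $W^{3}-2iH_{0}$ is integrable. I do not expect a real obstacle --- the argument is just the algebra of the decomposition together with the two independence assumptions --- but two points need care: in the first step one must check that each term absorbed into $R$ is genuinely integrable (it is, being a finite sum of constants times square integrable variables), so that the equivalence is not propped up by cancellation; and in the second step the mixed products $X_{j}W_{j}^{2}$ and $X_{j}Z_{jk}W_{jk}$ must be controlled purely through independence, since nothing beyond $L^{2}$ is assumed for $W_{j}$ or $W_{jk}$.
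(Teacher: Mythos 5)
Your argument is correct and essentially the paper's: expand $W^3$ through the BKR-decomposition, use the independence of $X_j$ from $W_j$ and of $(X_j,Z_{jk})$ from $W_{jk}$ to see that the terms involving $W_j^2$ and $W_{jk}$ are integrable, and match what remains against $H_0$. The only (cosmetic) difference is bookkeeping: the paper cancels the $\E[X_jZ_{jk}]$-terms exactly via $\sum_{j,k}\E[X_jZ_{jk}]=\E[W^2]=1$ and the identity $\tfrac12 W^3 - W = iH_0 + (\text{integrable terms})$, whereas you absorb them into a separately integrable remainder $R$.
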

\begin{proof}
	We use the decomposition
	\begin{align*}
		W^3
		&= \sum_{j \in J} X_j W^2 \\
		&= \sum_{j \in J} X_j W_j^2 + 2\sum_{j \in J} X_j Z_j W - \sum_{j \in J} X_j Z_j^2 \\
		&= \sum_{j \in J} X_j W_j^2 + 2\sum_{j \in J} \sum_{k \in N_j} X_j Z_{jk} W - \sum_{j \in J} X_j Z_j^2 \\
		&= \sum_{j \in J} X_j W_j^2 + 2\sum_{j \in J} \sum_{k \in N_j} X_j Z_{jk} W_{jk}
				 + 2\sum_{j \in J} \sum_{k \in N_j} X_j Z_{jk} (Z_j + V_{jk}) - \sum_{j \in J} X_j Z_j^2.
	\end{align*}
	Additionally, we find
	\begin{align*}
		W
		&= \sum_{j \in J} \sum_{k \in N_j} \E[ X_j Z_{jk} ] W \\
		&= \sum_{j \in J} \sum_{k \in N_j} \E[ X_j Z_{jk} ] (Z_j + V_{jk})
			+ \sum_{j \in J} \sum_{k \in N_j} \E[ X_j Z_{jk} ] W_{jk}.
	\end{align*}
	This leads to
	\begin{align*}
		\frac12 W^3 - W
		={}& \frac12 \sum_{j \in J} X_j W_j^2
			+ \sum_{j \in J} \sum_{k \in N_j} X_j Z_{jk} W_{jk}
			+ \sum_{j \in J} \sum_{k \in N_j} X_j Z_{jk} (Z_j + V_{jk})
			- \frac12 \sum_{j \in J} X_j Z_j^2\\
			&- \sum_{j \in J} \sum_{k \in N_j} \E[ X_j Z_{jk} ] (Z_j + V_{jk})
			- \sum_{j \in J} \sum_{k \in N_j} \E[ X_j Z_{jk} ] W_{jk}\\
		={}& i \cdot H_0
			+ \frac12 \sum_{j \in J} X_j W_j^2
			+ \sum_{j \in J} \sum_{k \in N_j} \big( X_j Z_{jk} - \E[ X_j Z_{jk} ] \big) W_{jk}.
	\end{align*}
	Since $X_j$ is centered and $X_j$ and $W_j$ respectively $X_jZ_{jk}$ and $W_{jk}$ are independent,
	we get as result, that there is $\E[\vert H_0 \vert] < \infty$, if and only if $\E[\vert W^3\vert] < \infty$.
\end{proof}

Let us summarize our results. We formulate a plug-in Theorem for any $W$ which is assumed to be BKR-decomposable.

\begin{theo}
Let $W$ be a BKR-decomposable random variable. Then 
\begin{align*}
		\Kol(\L(W), \Normal(0,1))
		&\leq
		\Big( \frac4{3\sqrt\pi} + \frac{24 \sqrt2}{\pi\sqrt{\pi}} \Big) \cdot A + \frac2\pi \cdot \frac{B}{A},
	\end{align*}
	where 
	$$
	A := \frac12 \sum_{j \in J} \E[ \vert X_j \vert Z_j^2 ] \\
				+ \sum_{j \in J} \sum_{k \in N_j}
					\big( \E[ \vert X_j Z_{jk} (Z_j + V_{jk}) \vert ] + \E[ \vert X_j Z_{jk} \vert ] \E[ \vert Z_j + V_{jk} \vert ] \big)
	$$
	and 
	\begin{align*}
	B := & \Big( \Var\Big(\sum_{j \in J} X_j Z_j^2 R_2(-tZ_j) \Big) \Big)^{1/2}\\
			& + \Big( \Var\Big(\sum_{j \in J} \sum_{k \in N_j} X_j Z_{jk} (Z_j + V_{jk}) R_1(-t(Z_j + V_{jk})) \Big) \Big)^{1/2}\\
			& + \Big( \Var\Big(\sum_{j \in J} \sum_{k \in N_j} \E[ X_j Z_{jk} ] (Z_j + V_{jk}) R_1(-t(Z_j + V_{jk})) \Big) \Big)^{1/2}.
	\end{align*}
	$R_1$ and $R_2$ are defined in \eqref{defn:R}.
\end{theo}

\section{Proof of Theorem 1.1}\label{sec:MT}

We can now proof our main application, \autoref{maintheorem}.
Let $G(n,p)$ be an Erdös-Rényi random graph with $n \in \N$ vertices
and probability $p=p(n)$, which may depend on $n$.
Let $E$ be the set of all possible edges of $G(n,p)$.
Finally, let $\{ I_l \}_{l \in E}$ be a family of independent indicator random variables, where $I_l=1$ indicates the presence of edge $l \in E$.
Obviously, $I_l \sim \text{Bernoulli}(p)$.

From now on, we suppose that all graphs and subgraphs in the following argumentation do not have isolated vertices.
Therefore, every (sub-)graph can be uniquely identified by its set of edges.
So there is a natural bijection between all possible subgraphs of $G(n,p)$ and the power set (without the empty set)
$\mathcal P (E) \backslash\{\emptyset\}$.
Thus, we will not distinguish in notation between a subset of $E$ and a (sub-)graph.
This implies further that every graph in our argumentation has at least one edge.
We will later be able to drop this assumption.

Given a graph $G$, let
$v_G$ be the number of vertices of $G$
and
$e_G$ be the number of edges of $G$.
Now, let $\G$ be a fixed graph.
Given a subgraph $H \subset \G$, define
\begin{align*}
	\Psi_H = n^{\nu_H} \cdot p^{e_H} \,\, \text{and} \,\,
	\Psi = \min\limits_{H \subset \G} \Psi_H.
\end{align*}
Since we assume, that every subgraph has at least one edge and therefore at least two vertices, we find that $\Psi \leq n^2 \cdot p$.\\
Further, we define the set $J := \{ j \subset E \;\vert\;$the graph given by $j$ is isomorphic to $\G \}.$
For every $j \in J$, we define the random variables
\begin{align*}
	Y_j = \prod_{m \in j} I_m,
\end{align*}
which is Bernoulli($p^{e_j}$)-distibuted and indicates, whether the graph $j$ is a subgraph of the random graph $G(n,p)$.
Further, let
\begin{align*}
	X_j = \sigma^{-1} \cdot (Y_j - \E[Y_j]) \,\, \text{with} \,\,
\sigma^2 := \Var \big( \sum_{j \in J} Y_j \big).
\end{align*}

Now we see that the family $\{ X_j \}_{j \in J}$ fulfills the BKR-decomposition properties:
With
\begin{align*}
	W &= \sum_{j \in J} X_j
\end{align*}
it follows, that $\E[X_j] = 0$ for all $j \in J$ and $\E[W^2] = 1$. For each $j \in J$ the neighborhood of $X_j$ is given by
\begin{align*}
	N_j := \{ k \in J \;\vert\; X_j \text{ and } X_k \text{ are dependent} \}.
\end{align*}
Further, let $D := \max\limits_{j \in J} \vert N_j \vert$
denote the cardinality of the biggest neighborhood.
Note that two random variables $X_j$ and $X_k$ are independent
if and only if the sets of edges $j$ and $k$ are disjoint respectively the graphs $j$ and $k$ do not have any edge in common.\\
The neighborhoods of higher order
\begin{flalign*}
	&&	N_j^{(1)}	&= N_j							&	\forall j &\in J, \\
	&&	N_j^{(m+1)} &= \bigcup_{k \in N_j^{(m)}} N_k		&	\forall j \in J, m &\in \N,
\end{flalign*}
as well as this set of chains of connected indices
\begin{align*}
	J_c= \{
		(j_1,j_2,j_3,j_4,j_5,j_6) \in J^6
		\;\vert\;
		&j_2 \in N_{j_1},\\
		&j_3 \in N_{j_1} \cup N_{j_2},\\
		&\dots, \\
		&j_6 \in N_{j_1} \cup N_{j_2} \cup N_{j_3} \cup N_{j_4} \cup N_{j_5}
	\}
\end{align*}
will be useful, later. Since all our random variables are bounded, they are also integrable and all moments exist.
In this case, we derive the BKR-decomposition from the previous \autoref{sec:BKR} in the following way:
For every $j \in J$, $k \in N_j$ we choose
\begin{align*}
	Z_{jk} = X_k, \,\,
	V_{jk} = \sum_{l \in N_j^C \cap N_k} X_l,\,\,
	W_{jk} = \sum_{l \in N_j^C \cap N_k^C} X_l,
\end{align*}
so that
\begin{align*}
	Z_j = \sum_{k \in N_j} X_k \,\, \text{and} \,\,
	W_j = \sum_{k \in N_j^C} X_k.
\end{align*}
This eventually leads to
\begin{align*}
	H_t
	={}& i\sum_{j \in J} X_j Z_j^2 R_2(-tZ_j)
			-i \sum_{j \in J} \sum_{k \in N_j}\big( X_j Z_{jk} - \E[X_j Z_{jk}] \big)
									(Z_j + V_{jk}) R_1(-t(Z_j + V_{jk}))\\
	={}& i
		\sum_{\mathclap{\substack{j \in J \\ k, l \in N_j}}}
			X_j X_k X_l
			R_2 \Big(-t \sum_{\mathclap{m \in N_j}} X_m \Big)
		-i
		\sum_{\mathclap{\substack{j \in J \\ k \in N_j, l \in N_j \cup N_k}}}
			\big( X_j X_k - \E[X_j X_k] \big)
			X_l
			R_1 \Big(-t \sum_{\mathclap{m \in N_j \cup N_k}} X_m \Big)
\end{align*}
Again, since all $X_j$ are bounded, all $H_t$ are integrable.

Now all assumptions from the previous sections are fulfilled so that we may apply
\autoref{BKR-Schranken}
to construct some upper bounds for $\E[H_t]$ and $\Cov(H_t , e^{-itW})$.
These upper bounds will be plugged into
\autoref{theo:STMresult}
to prove our main theorem (\autoref{maintheorem}).
Note that the upper bound in our main theorem distinguishes two cases.
Therefore, we will have to construct two bounds for $\E[H_t]$ and $\Cov(H_t , e^{-itW})$ each.
Using \autoref{BKR-Schranken}, we want to prove the following upper bounds.
\begin{lem}\label{sc-SchrankenH}
	For every $t \in \R$ it holds, that
	\begin{align*}
		\vert \E[H_t] \vert &\leq \frac{9 D^2}{2 \sigma^2} \cdot \sum_{j \in J} \E[\vert X_j \vert],\\
		\vert \E[H_t] \vert &\leq \frac{36}{\sigma^3} \cdot \sum_{\substack{j \in J \\ k, l \in N_j}} \E[ Y_j Y_k Y_l ].
	\end{align*}
\end{lem}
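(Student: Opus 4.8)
The plan is to start from the trivial $|\E[H_t]|\le\E[|H_t|]$ and feed the first estimate of \autoref{BKR-Schranken} the concrete ingredients of the subgraph model: in the present setting $Z_{jk}=X_k$, $Z_j=\sum_{k\in N_j}X_k$, and $Z_j+V_{jk}=\sum_{m\in N_j\cup N_k}X_m$. Both asserted inequalities then follow by estimating the three terms of \autoref{BKR-Schranken} summand by summand, the difference being which estimates one uses on the factors $X_k$ and $Z_j+V_{jk}$.

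For the first inequality only crude pointwise bounds are needed. Since $Y_j\in\{0,1\}$ and $\E[Y_j]=p^{e_j}$ we have $|X_j|\le\sigma^{-1}$ almost surely, hence $|Z_j|\le D\sigma^{-1}$ and $|Z_j+V_{jk}|\le|N_j\cup N_k|\,\sigma^{-1}\le 2D\sigma^{-1}$, while $|N_j|\le D$. Substituting these into the three terms of \autoref{BKR-Schranken} and pulling a factor $\E[|X_j|]$ out of every summand yields the contributions $\tfrac{D^2}{2\sigma^2}$ (from $\tfrac12\sum_j\E[|X_j|Z_j^2]$), $\tfrac{2D^2}{\sigma^2}$ (from $\sum_j\sum_{k\in N_j}\E[|X_jZ_{jk}(Z_j+V_{jk})|]$), and $\tfrac{2D^2}{\sigma^2}$ (from $\sum_j\sum_{k\in N_j}\E[|X_jZ_{jk}|]\E[|Z_j+V_{jk}|]$), each times $\sum_{j\in J}\E[|X_j|]$; these add up to $\tfrac{9D^2}{2\sigma^2}\sum_{j\in J}\E[|X_j|]$.

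For the second inequality the point is to turn every term into $S:=\sum_{j\in J}\sum_{k,l\in N_j}\E[Y_jY_kY_l]$. The only analytic input is the elementary fact that, for products of distinct edge-indicators $Y_{j_1},\dots,Y_{j_r}$, one has $\E\bigl[\prod_i|Y_{j_i}-\E[Y_{j_i}]|\bigr]\le 2^r\,\E\bigl[\prod_i Y_{j_i}\bigr]$: bound $|Y_{j_i}-\E[Y_{j_i}]|\le Y_{j_i}+\E[Y_{j_i}]$, expand the product, and note that each of the $2^r$ resulting terms has expectation a power of $p$ whose exponent, by subadditivity of the edge-count under graph unions, is at least $e_{j_1\cup\dots\cup j_r}$, i.e. at least the exponent of $\E[\prod_i Y_{j_i}]$. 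For $r=1,2,3$ this gives $\E[|X_jX_kX_l|]\le 8\sigma^{-3}\E[Y_jY_kY_l]$, $\E[|X_jX_k|]\le 4\sigma^{-2}\E[Y_jY_k]$ and $\E[|X_m|]\le 2\sigma^{-1}\E[Y_m]$; the same power-counting also gives $\E[Y_jY_k]\,\E[Y_m]\le\E[Y_jY_kY_m]$. Expanding $Z_j$ and $Z_j+V_{jk}$ as sums of the $X$'s in \autoref{BKR-Schranken} and applying these bounds termwise leaves the sum $S$ in the first term and the ``chain sum'' $\sum_{j\in J}\sum_{k\in N_j}\sum_{m\in N_j\cup N_k}\E[Y_jY_kY_m]$ in each of the other two. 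Splitting $N_j\cup N_k$ into $N_j$ and $N_k$, the $N_j$-part equals $S$, while in the $N_k$-part one reverses the order of summation and uses $k\in N_j\iff j\in N_k$ to see that it equals $\sum_k\sum_{j,m\in N_k}\E[Y_kY_jY_m]=S$ as well; hence the chain sum is $\le 2S$. Collecting the constants gives $\tfrac{4}{\sigma^3}S$ from the first term and $\tfrac{16}{\sigma^3}S$ from each of the other two, i.e. the claimed bound $\tfrac{36}{\sigma^3}S$.

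The step I expect to be the real obstacle is the treatment of the ``chain'' contributions $m\in N_k\setminus N_j$ coming from $V_{jk}$: such triples $(j,k,m)$ are not of the form ``two neighbours of one common index'', and one has to observe that relabelling the chain $j$--$k$--$m$ around its middle index $k$ (legitimate because the relation $k\in N_j$ is symmetric) puts them back into $S$. Pinning down the clean factor $2^r$ in the indicator inequality above — rather than the much lossier bound one would get from $|X_j|\le\sigma^{-1}$ — is the other point that genuinely requires a short argument.
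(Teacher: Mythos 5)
Your proposal is correct and follows essentially the same route as the paper: specialize the first bound of \autoref{BKR-Schranken} to the subgraph decomposition, bound the three resulting sums using $\vert X_j\vert\le\sigma^{-1}$ and $\vert N_j\vert\le D$ for the first inequality, and the expansion bounds $\E[\vert X_jX_kX_l\vert]\le(2/\sigma)^3\E[Y_jY_kY_l]$, $\E[Y_jY_k]\E[Y_l]\le\E[Y_jY_kY_l]$ together with the relabeling $k\in N_j\iff j\in N_k$ of the chain sum for the second, arriving at the same constants $\tfrac92$ and $36$. The only cosmetic difference is that for the first inequality the paper goes through the AM--GM/third-moment step of \autoref{sc-EWs-connect}(i) rather than your direct pointwise bounds, which changes nothing substantive.
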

\begin{lem}\label{sc-SchrankenC}
	For every $t \in \R$ it holds, that
	\begin{align*}
		\vert \Cov(H_t , e^{-itW}) \vert &\leq \frac{20 D^\frac52}{\sigma^\frac52}
			\cdot \Big( \sum_{j \in J} \E[\vert X_j \vert] \Big)^{1/2},\\
		\vert \Cov(H_t , e^{-itW}) \vert &\leq \frac{113}{\sigma^3} \cdot
			\Bigl( \sum_{(j_1,j_2,j_3,j_4,j_5,j_6) \in J_c}
			\E[ Y_{j_1} Y_{j_2} Y_{j_3} Y_{j_4} Y_{j_5} Y_{j_6} ] \Big)^{1/2}.
	\end{align*}
\end{lem}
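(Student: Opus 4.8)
The plan is to feed the second conclusion of \autoref{BKR-Schranken}, specialised to the subgraph setting of this section, into elementary estimates for the three variances it produces. Substituting $Z_{jk}=X_k$ and $V_{jk}=\sum_{l\in N_j^C\cap N_k}X_l$, so that $Z_j+V_{jk}=\sum_{m\in N_j\cup N_k}X_m$, \autoref{BKR-Schranken} bounds $\vert\Cov(H_t,e^{-itW})\vert$ by $\Var(A_1)^{1/2}+\Var(A_2)^{1/2}+\Var(A_3)^{1/2}$, where $A_1=\sum_{j\in J}X_jZ_j^2R_2(-tZ_j)$, $A_2=\sum_{j\in J}\sum_{k\in N_j}X_jX_k(Z_j+V_{jk})R_1(-t(Z_j+V_{jk}))$ and $A_3=\sum_{j\in J}\sum_{k\in N_j}\E[X_jX_k](Z_j+V_{jk})R_1(-t(Z_j+V_{jk}))$. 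It therefore suffices to bound each of these variances twice: once crudely, giving the $\frac{D^{5/2}}{\sigma^{5/2}}$--bound, and once more carefully, giving the $J_c$--bound.

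Before doing so I would assemble the elementary ingredients. From \autoref{R-properties}, $\Vert R_1\Vert_\infty\le 1$ and $\Vert R_2\Vert_\infty\le\frac12$; since $Y_j\in\{0,1\}$, $\vert X_j\vert\le\sigma^{-1}$, hence $\vert Z_j\vert\le D\sigma^{-1}$ and $\vert Z_j+V_{jk}\vert\le 2D\sigma^{-1}$; and $\vert\E[X_jX_k]\vert=\vert\Cov(X_j,X_k)\vert\le\sigma^{-2}\E[Y_jY_k]$. Cauchy--Schwarz gives $\big(\sum_{m\in S}X_m\big)^2\le\vert S\vert\sum_{m\in S}X_m^2$ for finite $S$. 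Most importantly, the pointwise bound $\vert Y_j-\E Y_j\vert\le Y_j+\E Y_j$, after expanding a product over indices $j_1,\dots,j_m$, together with $p\in[0,1]$ and the subadditivity $\vert\bigcup_{i=1}^m j_i\vert\le\vert\bigcup_{i\in S}j_i\vert+\sum_{i\notin S}e_{j_i}$ of the edge--set cardinality, yields $\E\big[\prod_{i=1}^m\vert X_{j_i}\vert\big]\le 2^m\sigma^{-m}\,\E\big[\prod_{i=1}^m Y_{j_i}\big]$; the same subadditivity gives $\E[\prod_{i\in A}Y_{j_i}]\,\E[\prod_{i\in B}Y_{j_i}]\le\E[\prod_{i\in A\cup B}Y_{j_i}]$.

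For the crude bound I would expand $\Var(A_\ell)=\sum_{\alpha,\beta}\Cov(U_\alpha,U_\beta)$ over the summands $U_\alpha$ of $A_\ell$, use that $\Cov(U_\alpha,U_\beta)=0$ whenever the edge sets on which $U_\alpha$ and $U_\beta$ depend are disjoint, and observe that, since that dependence propagates through only a bounded number of neighbourhood steps, a fixed $U_\alpha$ has at most $C_\G$ times a fixed power of $D$ non--independent partners. On the surviving pairs I would apply $\vert\Cov(U_\alpha,U_\beta)\vert\le\E[\vert U_\alpha\vert\,\vert U_\beta\vert]+\E[\vert U_\alpha\vert]\,\E[\vert U_\beta\vert]$, extract every factor except a single $\vert X_j\vert$ with the $L^\infty$--bounds above, and sum over $\alpha$; what is left is a power of $D$, a power of $\sigma^{-1}$ and $\sum_{j\in J}\E[\vert X_j\vert]$. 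Taking square roots of the three variances, adding, and collecting the numerical constants gives the first inequality.

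For the fine bound the skeleton is identical, except that each summand $U_\alpha$ is first rewritten --- using $Z_j^2=\sum_{k,l\in N_j}X_kX_l$ and $Z_j+V_{jk}=\sum_{m\in N_j\cup N_k}X_m$ --- as a product of at most three of the $X$'s times an $R$--factor of modulus $\le\frac12$ (and, for $A_3$, a factor $\E[X_jX_k]$, absorbed via the bound above). On a surviving pair, $\vert\Cov(U_\alpha,U_\beta)\vert\le\E[\vert U_\alpha\vert\,\vert U_\beta\vert]+\E[\vert U_\alpha\vert]\,\E[\vert U_\beta\vert]$ combined with the product estimate (and $\E[\prod Y]\,\E[\prod Y]\le\E[\prod\prod Y]$) replaces the covariance by a constant times $\sigma^{-6}\,\E[Y_{j_1}\cdots Y_{j_6}]$ for a six--tuple assembled from the two triples. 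The crux, which I expect to be the main obstacle, is then to show that non--vanishing of the covariance forces these six indices, together with a pair of neighbouring subgraphs realising the overlap --- which, because of the $R$--factors, need not be visible on the explicit $X$--factors --- to be linkable into a chain in the sense defining $J_c$ (each index in the union of the neighbourhoods of its predecessors), that each element of $J_c$ arises only boundedly often, and that the $\E[Y_{j_1}\cdots Y_{j_6}]$ obtained dominates the quantity from the product estimate. This is precisely where $J_c$ being chains of exactly six connected indices ($3+3$, one triple from each covariance factor glued along a common edge) is forced and where the explicit constant $113$ originates; summing the three contributions over $J_c$ and collecting constants gives the second inequality.
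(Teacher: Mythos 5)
Your reduction to the three variances \eqref{scA1}--\eqref{scA3} via \autoref{BKR-Schranken}, and your elementary ingredients ($\vert X_j\vert\le\sigma^{-1}$, $\E[\prod_i\vert X_{j_i}\vert]\le 2^m\sigma^{-m}\E[\prod_i Y_{j_i}]$, the product rule for $\E[Y\cdots]$), coincide with the paper's. The gap is in how you handle the surviving covariances. Bounding each summand by extracting the factors $f_{\ell,t}\big(\sum_m X_m\big)$ in sup norm costs a factor of order $D/\sigma$ per factor (since $\vert Z_j\vert\le D/\sigma$), while the number of partners $(j_2,k_2)$ whose summand is \emph{not} independent of a fixed one is of order $D^4$: dependence can enter solely through the neighbourhood sums inside the $R$-factors, via a bridge $m_1\in N_{j_1}\cup N_{k_1}$, $m_2\in N_{m_1}\cap(N_{j_2}\cup N_{k_2})$ that is invisible among the explicit $X$-factors. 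Your recipe therefore gives a variance bound of order $D^{7}\sigma^{-5}\sum_j\E[\vert X_j\vert]$, i.e.\ $\vert\Cov(H_t,e^{-itW})\vert\le C\,D^{7/2}\sigma^{-5/2}\big(\sum_j\E[\vert X_j\vert]\big)^{1/2}$, which is strictly weaker than the claimed $D^{5/2}$ bound and is in fact useless for \autoref{maintheorem}: with $D\le C_\G n^{v_\G-2}$ the resulting ratio $B/A$ no longer tends to $0$ (already for triangles). The paper recovers the missing factor $D^2$ by a tool absent from your plan: the $X_j$ are increasing functions of the independent indicators, hence associated, the neighbourhood sums are positive quadrant dependent by \cite{EPW67}, and Newman's covariance inequality (Lemma~3 in \cite{Ne80}) replaces the two sup-norm factors by $\Vert f_{\ell,t}'\Vert_\infty^2\,\vert\Cov(X_{m_1},X_{m_2})\vert$, with the bridging pair $m_2\in N_{m_1}$ absorbed into the index count. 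For \eqref{scA3} this applies directly; for \eqref{scA1} and \eqref{scA2}, where the random prefactor $X_{j_1}X_{k_1}$ blocks a direct application, the paper first splits off the configurations where $X_{j_1}X_{k_1}$ and $X_{j_2}X_{k_2}$ are dependent, then uses a mean-value-theorem splitting \eqref{S1A7}--\eqref{S1A8} and, for the remaining term \eqref{S1A6}, the law of total covariance given $\mathcal F=\sigma(I_a\,\vert\,a\in j_1\cup k_1\cup j_2\cup k_2)$ together with the fact that association survives this conditioning, so Newman's lemma can be applied conditionally.

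The same missing machinery is what closes the difficulty you yourself flag for the second inequality and leave unresolved: if the dependence between two summands enters only through the $R$-factors, the six indices visible after expanding the products need not form a connected chain, so the resulting $\E[Y_{j_1}\cdots Y_{j_6}]$ is not dominated by the sum over $J_c$, and summing over disconnected sextuples gives essentially the square of a first-moment bound, i.e.\ no variance gain at all. In the paper's argument the dependent-case splitting, the mean-value-theorem terms (which force an explicit bridging factor $X_{m_1}$ with $m_1\in N_{j_1}\cap(N_{j_2}\cup N_{k_2})$) and the conditional Newman step (which forces the pair $m_2\in N_{m_1}$) guarantee in every case that the sextuple produced is a chain in the sense of $J_c$, with each chain arising a bounded number of times; this is also where the explicit constants $20$ and $113$ come from. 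Without the association/Newman argument (unconditional for \eqref{scA3}, conditional for \eqref{scA1}, \eqref{scA2}) and the accompanying splittings, your proposal proves neither the stated power of $D$ nor the $J_c$ bound.
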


Before we show the proof of the inequalities given in \autoref{sc-SchrankenH} and \autoref{sc-SchrankenC},
we want to give a short explanation of some calculation steps, which will repeatedly appear.
We will exemplify these strategies in the proof of the following \autoref{sc-EWs-connect}.
In this list all, the inequalities are exemplified for a fixed number of random variables involved.
\begin{lem}\label{sc-EWs-connect}\mbox{}
		(i) It holds, that
			\begin{align*}
				\sum_{\substack{j \in J \\ k, l \in N_j}} \E[ \vert X_j X_k X_l \vert]
				&\leq
				D^2 \sum_{j \in J} \E[ \vert X_j \vert^3 ],\\
				\sum_{\substack{j \in J \\ k, l \in N_j}} \E[ \vert X_j X_k \vert] \E[ \vert X_l \vert]
				&\leq
				D^2 \sum_{j \in J} \E[ \vert X_j \vert^3 ].
			\end{align*}
		(ii) For $j,k,l,m \in J$, it holds, that
			\begin{align}
				\E[ Y_j Y_k ] \E[ Y_l Y_m ] &\leq \E[ Y_j Y_k Y_l Y_m ], \label{list1}\\
				\E[ \vert X_j X_k \vert ] &\leq \big(\frac2\sigma\big)^2 \E[ Y_j Y_k ], \label{list2}\\
				\vert \Cov (Y_j  Y_k, Y_l Y_m) \vert &\leq 2 \E[ Y_j Y_k Y_l Y_m ], \label{list3}\\
				\vert \Cov (X_j X_k, X_l X_m) \vert &\leq 2 \big(\frac2\sigma\big)^4 \E[Y_j Y_k Y_l Y_m], \label{list4}\\
				\vert \Cov (X_j X_k, X_l X_m) \vert &\leq \frac2{\sigma^3} \E[\vert X_j \vert]. \label{list5}
			\end{align}
\end{lem}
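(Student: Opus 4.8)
The plan is to treat the two parts separately, since (i) is a pure combinatorial counting estimate and (ii) is a collection of elementary moment inequalities for the indicator-type variables $Y_j$ and their normalizations $X_j$.

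For part~(i), I would start from the observation that for each fixed $j\in J$ the neighbourhood $N_j$ has at most $D$ elements, so the double inner sum over $k,l\in N_j$ contains at most $D^2$ terms. The first inequality then follows by bounding, for each such triple, the mixed absolute moment $\E[\vert X_jX_kX_l\vert]$ by $\E[\vert X_j\vert^3]$: this is the standard consequence of the generalized Hölder inequality (or, more elementarily, of the pointwise estimate $\vert abc\vert\le\frac13(\vert a\vert^3+\vert b\vert^3+\vert c\vert^3)$ together with the fact that, after summing over $j$ and using that each index appears in at most $D$ neighbourhoods of the others, all three resulting sums are dominated by $D^2\sum_{j\in J}\E[\vert X_j\vert^3]$ — though since $k,l$ range over $N_j$ with $j$ fixed, the cleaner route is Hölder directly, giving $\E[\vert X_jX_kX_l\vert]\le \E[\vert X_j\vert^3]^{1/3}\E[\vert X_k\vert^3]^{1/3}\E[\vert X_l\vert^3]^{1/3}$, and then using that all $X_m$ are identically bounded and that it suffices to keep one factor by re-indexing). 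For the second inequality I would argue the same way: $\E[\vert X_jX_k\vert]\E[\vert X_l\vert]\le \E[\vert X_j\vert^3]^{1/3}\E[\vert X_k\vert^3]^{1/3}\E[\vert X_l\vert^3]^{1/3}$ by Hölder on the first factor and Jensen ($\E[\vert X_l\vert]\le\E[\vert X_l\vert^3]^{1/3}$) on the second, and then the $D^2$ bound on the number of pairs $(k,l)$ finishes it.

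For part~(ii), each inequality is checked directly. For \eqref{list1}: the $Y_j$ are products of the Bernoulli indicators $I_m$, hence $0$–$1$ valued, and $Y_jY_k$ is independent of $Y_lY_m$ when the edge sets are disjoint while $\E[Y_jY_k]\E[Y_lY_m]\le\E[Y_jY_k Y_l Y_m]$ always holds because $\E[Y_jY_k]\le 1$ forces $\E[Y_jY_k]\E[Y_lY_m]\le\E[Y_lY_m]$, and more precisely one uses that for $\{0,1\}$-valued products of independent Bernoulli's one has the FKG/positive-association type bound $\E[Y_jY_k]\,\E[Y_lY_m]\le\E[Y_jY_kY_lY_m]$ (concretely: writing everything in terms of the underlying independent $I_m$, $\E[Y_jY_kY_lY_m]=p^{\vert j\cup k\cup l\cup m\vert}\ge p^{\vert j\cup k\vert}p^{\vert l\cup m\vert}=\E[Y_jY_k]\E[Y_lY_m]$ since $\vert j\cup k\cup l\cup m\vert\le\vert j\cup k\vert+\vert l\cup m\vert$). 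For \eqref{list2}: $\vert X_j\vert=\sigma^{-1}\vert Y_j-\E Y_j\vert\le\sigma^{-1}(Y_j+\E Y_j)\le 2\sigma^{-1}Y_j$ fails on $\{Y_j=0\}$, so instead I use $\vert X_j\vert\le\frac2\sigma$ on $\{Y_j=1\}$ and $\vert X_j\vert=\sigma^{-1}\E[Y_j]\le\frac2\sigma Y_{\text{(nothing)}}$ — the correct statement is $\vert X_j X_k\vert\le(2/\sigma)^2 Y_jY_k$ pointwise on $\{Y_j=Y_k=1\}$ and one must also handle the events where some $Y$ vanishes; the clean argument is that $\vert X_j\vert\le (2/\sigma)\,\mathbf 1_{\{?\}}$ is false, so one takes expectations and bounds $\E[\vert X_jX_k\vert]$ by splitting according to which indicators are $1$, noting $\E[\vert X_j\vert\,\mathbf 1_{Y_j=0}]=\sigma^{-1}\E[Y_j]\P(Y_j=0)\le\sigma^{-1}\E[Y_j]$ and similarly, so that altogether $\E[\vert X_jX_k\vert]\le(2/\sigma)^2\E[Y_jY_k]$. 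Inequalities \eqref{list3} and \eqref{list4} follow from $\vert\Cov(U,V)\vert\le\E[\vert UV\vert]+\E[\vert U\vert]\E[\vert V\vert]$ applied with $U=Y_jY_k$, $V=Y_lY_m$ (resp.\ $X_jX_k$, $X_lX_m$), then invoking \eqref{list1} (resp.\ \eqref{list2} followed by \eqref{list1}) to turn both terms into a multiple of $\E[Y_jY_kY_lY_m]$, giving the factor $2$. Finally \eqref{list5} is the crude bound: $\vert\Cov(X_jX_k,X_lX_m)\vert\le 2(2/\sigma)^4\E[Y_jY_kY_lY_m]\le 2(2/\sigma)^4 p^{e_j}=2(2/\sigma)^4\E[Y_j]=2(2/\sigma)^4\sigma\E[\vert X_j\vert]\cdot(\dots)$, i.e.\ one discards all but the edges of $j$ in the union and rewrites $\E[Y_j]$ in terms of $\E[\vert X_j\vert]$, absorbing constants; the stated form with $\frac2{\sigma^3}$ comes out after tracking the powers of $\sigma$ and the bound $\E[\vert X_j\vert]\ge\sigma^{-1}\E[Y_j](1-p^{e_j})$ (or simply noting $\E[Y_j]\le\sigma\E[\vert X_j\vert]+\E[Y_j]^2$ and rearranging).

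The main obstacle I anticipate is not any single deep step but rather bookkeeping: making the bound $\E[\vert X_jX_k\vert]\le(2/\sigma)^2\E[Y_jY_k]$ genuinely airtight (since $X_j$ is centered, $\vert X_j\vert$ is \emph{not} pointwise $\le(2/\sigma)Y_j$, and one has to be careful on the events where the $Y$'s vanish), and then being consistent about which powers of $\sigma$, $D$, and $p$ are absorbed into constants so that \eqref{list5} comes out in exactly the stated shape. Everything else is a routine application of Hölder, Jensen, the elementary covariance bound $\vert\Cov(U,V)\vert\le\E\vert UV\vert+\E\vert U\vert\,\E\vert V\vert$, and the monotonicity $p^{\vert A\cup B\vert}\ge p^{\vert A\vert}p^{\vert B\vert}$ for the union of edge sets.
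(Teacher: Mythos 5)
Parts (i) and the items \eqref{list1}--\eqref{list4} of your plan are essentially the paper's own argument: AM-GM/H\"older plus the $D^2$ count and the symmetry $k\in N_j\Leftrightarrow j\in N_k$ for (i), and for (ii) the pointwise bound $\vert X_a\vert\le\sigma^{-1}(Y_a+\E[Y_a])$, expansion of the product, and the monotonicity $p^{\vert A\cup B\vert}\ge p^{\vert A\vert}p^{\vert B\vert}$; your discussion of \eqref{list2} is meandering but lands on a workable argument equivalent to the paper's.

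The genuine gap is \eqref{list5}. You propose to deduce it from \eqref{list4} via $\E[Y_jY_kY_lY_m]\le\E[Y_j]$ and then to rewrite $\E[Y_j]$ in terms of $\E[\vert X_j\vert]$, ``absorbing constants''. But the conversion factor is not a constant: since all $Y_j$ are Bernoulli$(p^{e_\G})$, one has $\E[\vert X_j\vert]=\tfrac{2}{\sigma}\,p^{e_\G}(1-p^{e_\G})$, hence $\E[Y_j]=\sigma\,\E[\vert X_j\vert]/\bigl(2(1-p^{e_\G})\bigr)$, and your route yields at best a bound of the form $\tfrac{C}{\sigma^{3}(1-p^{e_\G})}\,\E[\vert X_j\vert]$. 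The factor $(1-p^{e_\G})^{-1}$ blows up precisely in the dense regime $p\to1$, which is the only regime in which \eqref{list5} is used: it feeds the bounds $\tfrac{9D^2}{2\sigma^2}\sum_j\E[\vert X_j\vert]$ and $\tfrac{20D^{5/2}}{\sigma^{5/2}}\bigl(\sum_j\E[\vert X_j\vert]\bigr)^{1/2}$, whose whole point is that $\sum_j\E[\vert X_j\vert]$ carries a factor $(1-p)$ giving the rate $1/(n\sqrt{1-p})$ for $p>p_0$; your extra $(1-p)^{-1}$ would cancel exactly that gain. The inequality \eqref{list5} must instead be proved directly, and more simply: bound $\vert\Cov(X_jX_k,X_lX_m)\vert\le\E[\vert X_jX_kX_lX_m\vert]+\E[\vert X_jX_k\vert]\,\E[\vert X_lX_m\vert]$ and use the pointwise bound $\vert X_a\vert\le\tfrac1\sigma$ on the three factors other than $X_j$ (respectively on $X_k$ and on $X_lX_m$), keeping one factor $\E[\vert X_j\vert]$; this gives exactly $\tfrac{2}{\sigma^{3}}\E[\vert X_j\vert]$ with no $(1-p)^{-1}$ loss.
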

\begin{proof}[Proof of \autoref{sc-EWs-connect}]\mbox{}
		(i) $\forall j, k, l \in J$ the application
			of the inequality of arithmetic and geometric means
			and for the second line also the application of Jensen's inequality
			lead to
			\begin{align*}
				\E[ \vert X_j X_k X_l \vert]
					&\leq \frac13 \big( \E[ \vert X_j \vert^3 ] + \E[ \vert X_k \vert^3 ] + \E[ \vert X_l \vert^3 ] \big),\\
				\E[ \vert X_j X_k \vert] \E[ \vert X_l \vert]
					&\leq \frac13 \big( \E[ \vert X_j \vert^3 ] + \E[ \vert X_k \vert^3 ] + \E[ \vert X_l \vert^3 ] \big).
			\end{align*}
			Now, we use symmetries of the dependency graph structure.
			For example, we use, that $k \in N_j \Leftrightarrow j \in N_k$.
			Thus,
			\begin{align*}
				\sum_{\substack{j \in J \\ k,l  \in N_j}} \frac13 \big( \E[ \vert X_j \vert^3 ] + \E[ \vert X_k \vert^3 ] + \E[ \vert X_l \vert^3 ] \big) 
				= \sum_{\substack{j \in J \\ k,l \in N_j}} \E[ \vert X_j \vert^3 ] 
				\leq D^2 \sum_{j \in J} \E[ \vert X_j \vert^3 ].
			\end{align*}
		(ii)	It holds, that
			\begin{align*}
				\E[ Y_j Y_k ] \E[ Y_l Y_m ]
				&= \E[ \prod_{a \in j \cup k } I_a ] \E[ \prod_{a \in l \cup m} I_a ]
				= p^{\vert j \cup k \vert} \cdot p^{\vert l \cup m \vert}
				\leq p^{\vert j \cup k \cup l \cup m \vert}
				= \E[ Y_j Y_k Y_l Y_m ].
			\end{align*}
			This proves \eqref{list1}. Since $X_j = \frac1\sigma \cdot (Y_j - \E[Y_j])$,
			by expansion of the product we find
			\begin{align*}
				\E[ X_j X_k ]
				\leq \frac1{\sigma^2} \cdot \E[ (Y_j + \E[Y_j]) \cdot (Y_k + \E[Y_k]) ] 
				= \frac1{\sigma^2} \cdot \big( \E[ Y_j Y_k ] + 3 \E[Y_j]\E[Y_k] \big).
			\end{align*}
			\eqref{list2} then is implied by an argument similar to the one used in \eqref{list1}. It holds, that
			$\vert \Cov (Y_j Y_k , Y_l Y_m) \vert \leq \E[ Y_j Y_k Y_l Y_m ] + \E[Y_j Y_k ]\E[ Y_l Y_m]$.
			\eqref{list3} then follows from \eqref{list1}. Moreover, we have
			$\vert \Cov (X_j X_k , X_l X_m) \vert \leq \E[ X_j X_k X_l X_m ] + \E[X_j X_k ]\E[ X_l X_m]$.
			\eqref{list4} is implied by arguments similar to those used in \eqref{list1} and \eqref{list2}. Finally, we have
			$\vert \Cov (X_j X_k , X_l X_m) \vert \leq \E[ X_j X_k X_l X_m ] + \E[X_j X_k ]\E[ X_l X_m]$.
			Since $X_k = \frac1\sigma \cdot (Y_k - \E[Y_k])$,
			we know that $\vert X_k \vert \leq \frac1\sigma$.
			Thus,
			\begin{align*}
				\E[ X_j X_k X_l X_m ] + \E[X_j X_k ]\E[ X_l X_m]
				&\leq \frac1{\sigma^3} \E[\vert X_j \vert] + \frac1{\sigma^3} \E[\vert X_j \vert].
			\end{align*}
			This proves \eqref{list5}.
\end{proof}
\begin{proof}[Proof of \autoref{sc-SchrankenH}]
	\autoref{BKR-Schranken} implies, that
	\begin{align}
		\vert \E[H_t] \vert
		\leq{}& \frac12 \sum_{j \in J} \E[ \vert X_j \vert Z_j^2 ]
			+
			\sum_{j \in J} \sum_{k \in N_j}
				\big(
					\E[ \vert X_j Z_{jk} (Z_j + V_{jk}) \vert ]
					+
					\E[ \vert X_j Z_{jk} \vert ] \E[ \vert Z_j + V_{jk} \vert ]
				\big) \nonumber\\
		\leq{}& \frac12 \sum_{\substack{j \in J \\ k,l \in N_j}}
				\E[ \vert X_j X_k X_l \vert ]
			+
			\sum_{\substack{j \in J \\ k \in N_j, l \in N_j \cup N_k}}
				\big(
					\E[ \vert X_j X_k X_l \vert ]
					+
					\E[ \vert X_j X_k \vert ]\E[ \vert X_l \vert ]
				\big). \label{proofHsplit}
	\end{align}
	On the one hand, according to the first part of \autoref{sc-EWs-connect}, \eqref{proofHsplit} can be bounded by
	\begin{align*}
       \frac12 \cdot D^2 \cdot \sum_{j \in J} \E[ \vert X_j \vert^3 ]
			+ D \cdot 2D \cdot \sum_{j \in J} \E[ \vert X_j \vert^3 ]
			+ D \cdot 2D \cdot \sum_{j \in J} \E[ \vert X_j \vert^3 ]
		= \frac92 \cdot D^2 \cdot \sum_{j \in J} \E[ \vert X_j \vert^3 ],
	\end{align*}
	which leads to the upper bound $\frac{9}{2} \cdot \frac{D^2}{\sigma^2} \cdot \sum_{j \in J} \E[ \vert X_j \vert ]$.
	This proves the first inequality from \autoref{sc-SchrankenH}.
	
	On the other hand, we can use some properties from the second part of \autoref{sc-EWs-connect}
	to derive another upper bound for \eqref{proofHsplit}:
	With $\E[ \vert X_j X_k X_l \vert ] \leq \big(\frac2\sigma\big)^3 \E[ Y_j Y_k Y_l ]$
	and \linebreak $\E[ \vert X_j X_k \vert ]\E[ \vert X_l \vert ] \leq \big(\frac2\sigma\big)^3 \E[ Y_j Y_k Y_l ]$,
	\eqref{proofHsplit} can be bounded by
	\begin{align*}
		& \big(\frac2\sigma\big)^3 \Big(
			\frac12 \sum_{\substack{j \in J \\ k,l \in N_j}}
				\E[ Y_j Y_k Y_l ]
			+
			2 \sum_{\substack{j \in J \\ k \in N_j, l \in N_j \cup N_k}}
				\E[ Y_j Y_k Y_l ]
			\Big)\\
		\leq{}& \big(\frac2\sigma\big)^3 \Big(
			\frac12 \sum_{\substack{j \in J \\ k,l \in N_j}}
				\E[ Y_j Y_k Y_l ]
			+
			2 \sum_{\substack{j \in J \\ k,l \in N_j}}
				\E[ Y_j Y_k Y_l ]
			+
			2 \sum_{\substack{j \in J \\ k \in N_j, l \in N_k}}
				\E[ Y_j Y_k Y_l ]
			\Big)\\
		={}& \frac{36}{\sigma^3}
			\sum_{\substack{j \in J \\ k,l \in N_j}}
			\E[ Y_j Y_k Y_l ]
			.
	\end{align*}
	Hence, both inequalities from \autoref{sc-SchrankenH} are proven.
\end{proof}
\begin{proof}[Proof of \autoref{sc-SchrankenC}]
	\autoref{BKR-Schranken} states, that
	\begin{align*}
		\vert \Cov( H_t  , e^{-itW}) \vert
		\leq{}& \Big( \Var\Big(\sum_{j \in J} X_j Z_j^2 R_2(-tZ_j) \Big) \Big)^{1/2}\\
			& + \Big( \Var\Big(\sum_{j \in J} \sum_{k \in N_j} X_j Z_{jk} (Z_j + V_{jk}) R_1(-t(Z_j + V_{jk})) \Big) \Big)^{1/2}\\
			& + \Big( \Var\Big(\sum_{j \in J} \sum_{k \in N_j} \E[ X_j Z_{jk} ] (Z_j + V_{jk}) R_1(-t(Z_j + V_{jk})) \Big) \Big)^{1/2}.
	\end{align*}
	For every $t \in \R$ we introduce new functions
	\begin{align*}
		f_{1,t}: \R &\longrightarrow \R, &
		f_{2,t}: \R &\longrightarrow \R, \\
		x &\longmapsto x R_1(-tx), &
		x &\longmapsto x R_2(-tx).
	\end{align*}
	Hence, we can write the upper bound from \autoref{BKR-Schranken} as
	\begin{align}
		\vert \Cov( H_t  , e^{-itW}) \vert
		\leq{}& \Big( \Var\Big(\sum_{j \in J} X_j Z_j f_{2,t}(Z_j) \Big) \Big)^{1/2} \nonumber\\
			& + \Big( \Var\Big(\sum_{j \in J} \sum_{k \in N_j} X_j Z_{jk} f_{1,t}(Z_j + V_{jk}) \Big) \Big)^{1/2} \nonumber\\
			& + \Big( \Var\Big(\sum_{j \in J} \sum_{k \in N_j} \E[ X_j Z_{jk} ] f_{1,t}(Z_j + V_{jk}) \Big) \Big)^{1/2} \nonumber\\
		={}& \Big( \Var\Big(
				\sum_{\mathclap{\substack{j \in J \\ k \in N_j}}}
				X_j X_k
				f_{2,t} \Big( \sum_{m \in N_j} X_m \Big)
			\Big) \Big)^{1/2} \label{scA1}\\
			& + \Big( \Var\Big(
				\sum_{\mathclap{\substack{j \in J \\ k \in N_j}}}
				X_j X_k
				f_{1,t} \Big( \sum_{m \in N_j \cup N_k} X_m \Big)
			\Big) \Big)^{1/2} \label{scA2}\\
			& + \Big( \Var\Big(
				\sum_{\mathclap{\substack{j \in J \\ k \in N_j}}}
				\E[X_j X_k]
				f_{1,t} \Big( \sum_{m \in N_j \cup N_k} X_m \Big)
			\Big) \Big)^{1/2}. \label{scA3}
	\end{align}
	We will analyze each summand \eqref{scA1}, \eqref{scA2}, \eqref{scA3} separately.
	We start with \eqref{scA3}, since it is the easiest one to analyze,
	but the strategy can later be extended to be useful for the other two summands, too.

	\noindent\textbf{Term \eqref{scA3}:} $\displaystyle\Big( \Var\Big(
				\sum_{\mathclap{\substack{j \in J \\ k \in N_j}}}
				\E[X_j X_k]
				f_{1,t} \Big( \sum_{m \in N_j \cup N_k} X_m \Big)
			\Big) \Big)^{1/2}$\\
	We have
	\begin{align*}
		&\Var\Big(
				\sum_{\mathclap{\substack{j \in J \\ k \in N_j}}}
				\E[X_j X_k]
				f_{1,t} \Big( \sum_{m \in N_j \cup N_k} X_m \Big)
			\Big) \\
		={}& \sum_{\mathclap{\substack{ j_1 \in J, k_1 \in N_{j_1} \\ j_2 \in J, k_2 \in N_{j_2} }}} 
				\E[ X_{j_1} X_{k_1} ] \E[ X_{j_2} X_{k_2} ]
				\Cov\Big( f_{1,t}\Big( \sum_{m_1 \in N_{j_1} \cup N_{k_1}} X_{m_1} \Big)
					, f_{1,t}\Big( \sum_{m_2 \in N_{j_2} \cup N_{k_2}} X_{m_2} \Big) \Big).
	\end{align*}
	Since the indicator variables $I_l$, $l \in E$, are independent, they are associated.
	Therefore, according to the properties listed in \cite{EPW67},
	the random variables
	$\sum_{m_1 \in N_{j_1} \cup N_{k_1}} X_{m_1}$
	and
	$\sum_{m_2 \in N_{j_2} \cup N_{k_2}} X_{m_2}$
	are associated, too.
	And according to Theorem~4.4 from \cite{EPW67}, they are \emph{positive quadrant dependent}.
	Thus, we may use Lemma~3 from \cite{Ne80} and we get
	\begin{align*}
		&\Big\vert \Cov\Big( f_{1,t}\Big( \sum_{m_1 \in N_{j_1} \cup N_{k_1}} X_{m_1} \Big)
					, f_{1,t}\Big( \sum_{m_2 \in N_{j_2} \cup N_{k_2}} X_{m_2} \Big) \Big) \Big\vert \\
		&\leq \Vert f^\prime_{1,t} \Vert_\infty^2 \cdot \Cov\Big( \sum_{m_1 \in N_{j_1} \cup N_{k_1}} X_{m_1}
					, \sum_{m_2 \in N_{j_2} \cup N_{k_2}} X_{m_2} \Big)\\
		&= \Vert f^\prime_{1,t} \Vert_\infty^2 \cdot
					\sum_{\substack{m_1 \in N_{j_1} \cup N_{k_1} \\ m_2 \in N_{j_2} \cup N_{k_2}}} \Cov\Big( X_{m_1} , X_{m_2} \Big).
	\end{align*}
	If $m_2 \not\in N_{m_1}$, the covariance of $X_{m_1}$ and $X_{m_2}$ is $0$.
	These summands therefore may be omitted.
	Besides, it is easy to calculate, that $\Vert f^\prime_{1,t} \Vert_\infty = 1$.
	This leads to the inequality
	\begin{align*}
		\Big\vert \Var\Big(
				\sum_{\mathclap{\substack{j \in J \\ k \in N_j}}}
				\E[X_j X_k]
				f_{1,t} \Big( \sum_{m \in N_j \cup N_k} X_m \Big)
			\Big) \Big\vert
		\leq{}& \sum_{\mathclap{\substack{j_1 \in J, k_1 \in N_{j_1} \\ m_1 \in N_{j_1} \cup N_{k_1} \\
							j_2 \in J, k_2 \in N_{j_2}, m_2 \in (N_{j_2} \cup N_{k_2}) \cap N_{m_1}}}}
			\E[ \vert X_{j_1} X_{k_1} \vert] \E[ \vert X_{j_2} X_{k_2} \vert ] \vert\Cov( X_{m_1} , X_{m_2} )\vert.
	\end{align*}
	Using some calculations, which have been exemplified in \autoref{sc-EWs-connect}, we find on the one hand, that the right hand side of the
	last estimate can be bounded by
	\begin{align*}
\quad \quad \frac{2}{\sigma^5} \quad \sum_{\mathclap{\substack{j_1 \in J, k_1 \in N_{j_1} \\ m_1 \in N_{j_1} \cup N_{k_1} \\
							j_2 \in J, k_2 \in N_{j_2}, m_2 \in (N_{j_2} \cup N_{k_2}) \cap N_{m_1}}}}
					\E[ \vert X_{j_1} \vert ]
		\leq \frac{2}{\sigma^5} \cdot D \cdot 2D \cdot D \cdot 2D \cdot D \cdot \sum_{j \in J}
					\E[ \vert X_j \vert ]
		\leq 8 \cdot \frac{D^5}{\sigma^5} \cdot \E[ \vert X_j \vert ]
	\intertext{and on the other hand by}
\quad \quad \frac{2^8}{\sigma^6} \quad \sum_{\mathclap{\substack{j_1 \in J, k_1 \in N_{j_1} \\ m_1 \in N_{j_1} \cup N_{k_1} \\
							j_2 \in J, k_2 \in N_{j_2}, m_2 \in (N_{j_2} \cup N_{k_2}) \cap N_{m_1}}}}
			\E[ Y_{j_1} Y_{k_1} Y_{m_1} Y_{j_2} Y_{k_2} Y_{m_2} ]
		\leq \frac{256}{\sigma^6}  2 \sum_{\mathclap{\qquad(j_1,j_2,j_3,j_4,j_5,j_6) \in J_c}}
			\E[ Y_{j_1} Y_{j_2} Y_{j_3} Y_{j_4} Y_{j_5} Y_{j_6} ].
	\end{align*}
	Taking the root on both sides gives us two upper bounds for \eqref{scA3}.\\
	
	\noindent\textbf{Term \eqref{scA1}:} $\displaystyle\Big( \Var\Big(
				\sum_{\mathclap{\substack{j \in J \\ k \in N_j}}}
				X_j X_k
				f_{2,t} \Big( \sum_{m \in N_j} X_m \Big)
			\Big) \Big)^{1/2}$\\
	We want to apply the same calculation steps as above.
	However, this time the argument of the variance consists of a product of several random variables.
	Therefore, we can not simply repeat the calculation steps from above.
	The variance in \eqref{scA1} can be represented as
	\begin{align*}
		\sum_{\substack{ j_1 \in J, k_1 \in N_{j_1} \\ j_2 \in J, k_2 \in N_{j_2}}}
			\Cov\Big( X_{j_1} X_{k_1} f_{2,t}\Big( \sum_{m_1 \in N_{j_1}} X_{m_1} \Big)
					, X_{j_2} X_{k_2} f_{2,t}\Big( \sum_{m_2 \in N_{j_2}} X_{m_2} \Big) \Big)
	\end{align*}
	First, we split the sum, according to the dependency structure between $X_{j_1}$, $X_{k_1}$, $X_{j_2}$ and $X_{k_2}$.
	Denote by $A = \{ j_1 \in J, k_1 \in N_{j_1}, j_2 \in J, k_2 \in N_{j_2} \}$ and $B = \{ j_2 \in N_{j_1} \}$, $C= \{ j_2 \in N_{k_1} \}$,
	$D = \{ k_2 \in N_{j_1} \}$ and $E= \{ k_2 \in N_{k_1} \}$. We choose the splitting into five sets as
	$$
	A = (A \cap B) \cup (A \cap B^c \cap C) \cup (A \cap B^c \cap C^c \cap D) \cup  (A \cap B^c \cap C^c \cap D^c \cap E) \cup (A \cap B^c \cap C^c \cap D^c \cap E^c).
	$$
	In the summation for the first 4 sets, we observe that 
	these sums contain all cases, in which
	$X_{j_1} X_{k_1}$ and $X_{j_2} X_{k_2}$ are dependent.
	Using
	the definition of $f_{2,t}$,
	the sesquilinearity of the covariance
	and some calculations, which have been exemplified in \autoref{sc-EWs-connect},
	we get as an upper bound for each of those four sums 
	on the one hand
	\begin{align*}
		2 \cdot \Vert R_2 \Vert_\infty^2 \cdot \frac{D^5}{\sigma^5} \cdot \sum_{j \in J} \E[ \vert X_j \vert ]
		={}& \frac12 \frac{D^5}{\sigma^5} \sum_{j \in J} \E[ \vert X_j \vert ],
	\end{align*}
	and on the other hand 
	\begin{align*}
		\Vert R_2 \Vert_\infty^2 \frac{2^7}{\sigma^6} \sum_{(j_1,j_2,j_3,j_4,j_5,j_6) \in J_c}
			\E[ Y_{j_1} Y_{j_2} Y_{j_3} Y_{j_4} Y_{j_5} Y_{j_6} ] 
		= \frac{32}{\sigma^6} \sum_{(j_1,j_2,j_3,j_4,j_5,j_6) \in J_c}
			\E[ Y_{j_1} Y_{j_2} Y_{j_3} Y_{j_4} Y_{j_5} Y_{j_6} ].
	\end{align*}

	It remains to analyze term 
	\begin{align}
			\sum_{\mathclap{\substack{ j_1 \in J, k_1 \in N_{j_1} \\j_2 \in N_{j_1}^C \cap N_{k_1}^C \\ k_2 \in N_{j_2} \cap N_{j_1}^C \cap N_{k_1}^C}}}
			\Cov\Big( X_{j_1} X_{k_1} f_{2,t}\Big( \sum_{m_1 \in N_{j_1}} X_{m_1} \Big)
					, X_{j_2} X_{k_2} f_{2,t}\Big( \sum_{m_2 \in N_{j_2}} X_{m_2} \Big) \Big)
			\label{S1A5}
	\end{align}
	which contains all cases, in which
	$X_{j_1} X_{k_1}$ and $X_{j_2} X_{k_2}$
	are independent.
	Obviously, if $\{X_{m_1}\}_{m_1 \in N_{j_1}}$ and $\{X_{m_2}\}_{m_2 \in N_{j_2}}$ are independent too,
	the Covariance is $0$.
	So now, suppose that $X_{j_1} X_{k_1}$ and $X_{j_2} X_{k_2}$ are independent,
	but $\sum_{m_1 \in N_{j_1}} X_{m_1}$ and $\sum_{m_2 \in N_{j_2}} X_{m_2}$ do depend on each other.\\
	We will now use the sesquilinearity of the covariance
	to split the sum $\sum_{m_1 \in N_{j_1}} X_{m_1}$ according to the dependence on $X_{j_2}$ and $X_{k_2}$ and vice versa.
	\begin{align}
		&\phantom{+} \Cov\Big(
				X_{j_1} X_{k_1} f_{2,t}\big( \sum_{m_1 \in N_{j_1}} X_{m_1} \big) ,
				X_{j_2} X_{k_2} f_{2,t}\big( \sum_{m_2 \in N_{j_2}} X_{m_2} \big)		\Big)
				\nonumber\\
		={}&\phantom{+} \Cov\Big(
				X_{j_1} X_{k_1} f_{2,t}\big( \qquad\sum_{\mathclap{m_1 \in N_{j_1} \cap N_{j_2}^C \cap N_{k_2}^C}} \; X_{m_1} \;\; \big) ,
				X_{j_2} X_{k_2} f_{2,t}\big( \qquad\sum_{\mathclap{m_2 \in N_{j_2} \cap N_{j_1}^C \cap N_{k_1}^C}} \; X_{m_2} \;\; \big)
				\Big)
				\label{S1A6}\\
		&+ \Cov\Big(
				X_{j_1} X_{k_1}	\Big(	f_{2,t}\big( \sum_{m_1 \in N_{j_1}} X_{m_1} \big)
							-f_{2,t}\big( \qquad\sum_{\mathclap{m_1 \in N_{j_1} \cap N_{j_2}^C \cap N_{k_2}^C}} \; X_{m_1} \;\; \big) \Big) ,
				X_{j_2} X_{k_2} f_{2,t}\big( \qquad\sum_{\mathclap{m_2 \in N_{j_2} \cap N_{j_1}^C \cap N_{k_1}^C}} \; X_{m_2} \;\; \big)
				\Big)
				\label{S1A7}\\
		&+ \Cov\Big(
				X_{j_1} X_{k_1} f_{2,t}\big( \sum_{m_1 \in N_{j_1}} X_{m_1} \big) ,
				X_{j_2} X_{k_2} 	\Big(	f_{2,t}\big( \sum_{m_2 \in N_{j_2}} X_{m_2} \big)
							-f_{2,t}\big( \qquad\sum_{\mathclap{m_2 \in N_{j_2} \cap N_{j_1}^C \cap N_{k_1}^C}} \; X_{m_2} \;\; \big) \Big)
							\Big)
				\label{S1A8}
	\end{align}
	Using
	the definition of $f_{2,t}$,
	some calculations, which have been exemplified in \autoref{sc-EWs-connect},
	and
	the mean value theorem,
	we get the following upper bound for \eqref{S1A7}:
	\begin{align*}
		&\Big\vert \Cov\Big(
				X_{j_1} X_{k_1}	\Big(	f_{2,t}\big( \sum_{m_1 \in N_{j_1}} X_{m_1} \big)
							-f_{2,t}\big( \qquad\sum_{\mathclap{m_1 \in N_{j_1} \cap N_{j_2}^C \cap N_{k_2}^C}} \; X_{m_1} \;\; \big) \Big) ,
				X_{j_2} X_{k_2} f_{2,t}\big( \qquad\sum_{\mathclap{m_2 \in N_{j_2} \cap N_{j_1}^C \cap N_{k_1}^C}} \; X_{m_2} \;\; \big)
				\Big)
			\Big\vert \\
		\leq{}&\E\Big[
				\vert X_{j_1} X_{k_1} \vert
					\Vert f_{2,t}^\prime \Vert_\infty \qquad\sum_{\mathclap{m_1 \in N_{j_1} \cap (N_{j_2} \cup N_{k_2})}} \; \vert X_{m_1} \vert \;\;
				\cdot
				\vert X_{j_2} X_{k_2} \vert
					\Vert R_2 \Vert_\infty \qquad\sum_{\mathclap{m_2 \in N_{j_2} \cap (N_{j_1} \cup N_{k_1})^C}} \; \vert X_{m_2} \vert \;\;
			\Big]
			\\
		& + \E\Big[
				\vert X_{j_1} X_{k_1} \vert
					\Vert f_{2,t}^\prime \Vert_\infty \qquad\sum_{\mathclap{m_1 \in N_{j_1} \cap (N_{j_2} \cup N_{k_2})}} \; \vert X_{m_1} \vert \;\;
			\Big] \cdot \E\Big[
				\vert X_{j_2} X_{k_2} \vert
					\Vert R_2 \Vert_\infty \qquad\sum_{\mathclap{m_2 \in N_{j_2} \cap (N_{j_1} \cup N_{k_1})^C}} \; \vert X_{m_2} \vert \;\;
			\Big]
			\\
		\leq{}&
			\sum_{\substack{m_1 \in N_{j_1} \cap (N_{j_2} \cup N_{k_2}) \\ m_2 \in N_{j_2} \cap (N_{j_1} \cup N_{k_1})^C}}
			\min \Big\{
				2 \cdot \frac1{\sigma^5} \cdot \E[ \vert X_{j_1} \vert ] \; , \;
				2 \cdot \big(\frac2\sigma\big)^6 \cdot \E[ Y_{j_1} Y_{k_1} Y_{m_1} Y_{j_2} Y_{k_2} Y_{m_2} ]
			\Big\}.
	\end{align*}
	When we will merge all upper bounds for \eqref{S1A6}, \eqref{S1A7}, \eqref{S1A8} to derive upper bounds for \eqref{S1A5},
	we will have to sum these bounds up regarding some tupels $(j_1,k_1,j_2,k_2)$.
	Notice that he set 
	$$
	A := \Big\{ k_1 \in N_{j_1}, m_1 \in N_{j_1} \cap (N_{j_2} \cup N_{k_2}),  j_2 \in N_{j_1}^C \cap N_{k_1}^C,
					k_2 \in N_{j_2} \cap N_{j_1}^C \cap N_{k_1}^C, m_2 \in N_{j_2} \cap (N_{j_1} \cup N_{k_1})^C \Big\}
	$$
	has at most $ 2 D^5$ elements. Hence, \eqref{S1A7} will contribute to the upper bounds for \eqref{S1A5} with the quantity
	on the one hand
	\begin{align*}
       4 \cdot \frac{D^5}{\sigma^5} \cdot \sum_{j \in J} \E[ \vert X_j \vert ]
	\end{align*}
	and on the other hand
	\begin{align*}
		\frac{128}{\sigma^6} 
			\sum_{(j_1,j_2,j_3,j_4,j_5,j_6) \in J_c}
			\E[ Y_{j_1} Y_{j_2} Y_{j_3} Y_{j_4} Y_{j_5} Y_{j_6} ].
	\end{align*}
	\eqref{S1A8} can be bounded analogously, so it remains to analyze \eqref{S1A6}.\\
	We still want to use the same strategy as we did in the proof concerning \eqref{scA3}.
	Therefore, we need to draw the factors $X_{j_1} X_{k_1}$ and $X_{j_2} X_{k_2}$ out of the covariances.
	We will achieve this by means of the conditional covariance.
	Let $\mathcal{F} := \sigma( I_a \;\vert\; a \in j_1 \cup k_1 \cup j_2 \cup k_2 )$ be the $\sigma$-algebra
	generated by the indicators of the edges of the graphs $j_1$, $k_1$, $j_2$, $k_2$.
	According to the law of total covariance, we get
	\begin{align*}
		&\Cov\Big(
				X_{j_1} X_{k_1} f_{2,t}\big( \qquad\sum_{\mathclap{m_1 \in N_{j_1} \cap N_{j_2}^C \cap N_{k_2}^C}} \; X_{m_1} \;\; \big) ,
				X_{j_2} X_{k_2} f_{2,t}\big( \qquad\sum_{\mathclap{m_2 \in N_{j_2} \cap N_{j_1}^C \cap N_{k_1}^C}} \; X_{m_2} \;\; \big)
				\Big)\\
		={}& \E \Bigg( \Cov\Big(
				X_{j_1} X_{k_1} f_{2,t}\big( \qquad\sum_{\mathclap{m_1 \in N_{j_1} \cap N_{j_2}^C \cap N_{k_2}^C}} \; X_{m_1} \;\; \big) ,
				X_{j_2} X_{k_2} f_{2,t}\big( \qquad\sum_{\mathclap{m_2 \in N_{j_2} \cap N_{j_1}^C \cap N_{k_1}^C}} \; X_{m_2} \;\; \big)
					\;\Big\vert\; \mathcal{F} \Big) \Bigg)\\
		&+ \Cov\Bigg(
				\E\Big(
				X_{j_1} X_{k_1} f_{2,t}\big( \qquad\sum_{\mathclap{m_1 \in N_{j_1} \cap N_{j_2}^C \cap N_{k_2}^C}} \; X_{m_1} \;\; \big)
					\;\Big\vert\; \mathcal{F} \Big) ,
				\E\Big( 
				X_{j_2} X_{k_2} f_{2,t}\big( \qquad\sum_{\mathclap{m_2 \in N_{j_2} \cap N_{j_1}^C \cap N_{k_1}^C}} \; X_{m_2} \;\; \big)
					\;\Big\vert\; \mathcal{F} \Big)		\Bigg).
	\end{align*}
	However, we have already removed all tupels of indices, that could lead to difficulties,
	so that the conditional expectations above are always independent.
	Their covariance hence is $0$.
	It remains to analyze the expectation of the conditional covariance.
	Using that $X_{j_1}, X_{k_1}, X_{j_2}, X_{k_2}$ are $\mathcal{F}$-measurable, we get
	\begin{align*}
		&\Cov\Big(
				X_{j_1} X_{k_1} f_{2,t}\big( \qquad\sum_{\mathclap{m_1 \in N_{j_1} \cap N_{j_2}^C \cap N_{k_2}^C}} \; X_{m_1} \;\; \big) ,
				X_{j_2} X_{k_2} f_{2,t}\big( \qquad\sum_{\mathclap{m_2 \in N_{j_2} \cap N_{j_1}^C \cap N_{k_1}^C}} \; X_{m_2} \;\; \big)
				\Big)\\
		={}& \E \Bigg( X_{j_1} X_{k_1} \cdot X_{j_2} X_{k_2} \cdot \Cov\Big(
				f_{2,t}\big( \qquad\sum_{\mathclap{m_1 \in N_{j_1} \cap N_{j_2}^C \cap N_{k_2}^C}} \; X_{m_1} \;\; \big) ,
				f_{2,t}\big( \qquad\sum_{\mathclap{m_2 \in N_{j_2} \cap N_{j_1}^C \cap N_{k_1}^C}} \; X_{m_2} \;\; \big)
				\;\Big\vert\; \mathcal{F} \Big) \Bigg).
	\end{align*}
	Recall that sums like
	$\sum_{m_1 \in N_{j_1} \cap N_{j_2}^C \cap N_{k_2}^C} X_{m_1}$
	and
	$\sum_{m_2 \in N_{j_2} \cap N_{j_1}^C \cap N_{k_1}^C} X_{m_2}$
	are associated.
	Now, for the next step it is crucial to notice
	that those sums of random variables
	given \linebreak$\{ I_a \}_{a \in j_1 \cup k_1 \cup j_2 \cup k_2}$
	are still associated.
	Thus, we may again apply Lemma~3 from \cite{Ne80}.
	\begin{align*}
		&\Big\vert \Cov\Big(
				X_{j_1} X_{k_1} f_{2,t}\big( \qquad\sum_{\mathclap{m_1 \in N_{j_1} \cap N_{j_2}^C \cap N_{k_2}^C}} \; X_{m_1} \;\; \big) ,
				X_{j_2} X_{k_2} f_{2,t}\big( \qquad\sum_{\mathclap{m_2 \in N_{j_2} \cap N_{j_1}^C \cap N_{k_1}^C}} \; X_{m_2} \;\; \big)
				\Big)\Big\vert\\
		&\leq \Vert f^\prime_{2,t} \Vert_\infty^2 \cdot
			\sum_{\substack{m_1 \in N_{j_1} \cap N_{j_2}^C \cap N_{k_2}^C \\ m_2 \in N_{j_2} \cap N_{j_1}^C \cap N_{k_1}^C}}
			\E \Big( \vert X_{j_1} X_{k_1} \cdot X_{j_2} X_{k_2} \vert \cdot
			\big\vert \Cov \big( X_{m_1}, X_{m_1}  \;\big\vert\; \mathcal{F} \big) \big\vert \Big)\\
		&= \Vert f^\prime_{2,t} \Vert_\infty^2 \cdot
			\sum_{\substack{m_1 \in N_{j_1} \cap N_{j_2}^C \cap N_{k_2}^C \phantom{\cap N_{m_1}} \\
						m_2 \in N_{j_2} \cap N_{j_1}^C \cap N_{k_1}^C \cap N_{m_1}}}
			\E \Big( \vert X_{j_1} X_{k_1} \cdot X_{j_2} X_{k_2} \vert \cdot
			\big\vert \Cov \big( X_{m_1}, X_{m_1}  \;\big\vert\; \mathcal{F} \big) \big\vert \Big).
	\end{align*}
	It is possible to calculate that $\Vert f^\prime_{2,t} \Vert_\infty = \frac12$.
	Using again
	some calculations, which have been exemplified in \autoref{sc-EWs-connect},
	we get the following upper bounds for \eqref{S1A6}.
	On the one hand
	\begin{align*}
		&\big(\frac12\big)^2 \cdot \frac2{\sigma^5} \cdot 2 \cdot
			\sum_{\substack{m_1 \in N_{j_1} \cap N_{j_2}^C \cap N_{k_2}^C \phantom{\cap N_{m_1}} \\
						m_2 \in N_{j_2} \cap N_{j_1}^C \cap N_{k_1}^C \cap N_{m_1}}}
			\E[ \vert X_{j_1} \vert ]
	\end{align*}
	and on the other hand
	\begin{align*}
		&\big(\frac12\big)^2 \cdot \big(\frac2\sigma\big)^6 \cdot 2 \cdot
			\sum_{\substack{m_1 \in N_{j_1} \cap N_{j_2}^C \cap N_{k_2}^C \phantom{\cap N_{m_1}} \\
						m_2 \in N_{j_2} \cap N_{j_1}^C \cap N_{k_1}^C \cap N_{m_1}}}
			\E[ Y_{j_1} Y_{k_1} Y_{m_1} Y_{j_2} Y_{k_2} Y_{m_2} ]
	\end{align*}
	Note again that, when we will merge all upper bounds for \eqref{S1A6}, \eqref{S1A7}, \eqref{S1A8} to construct upper bounds for \eqref{S1A5},
	we will have to sum these bounds up regarding some tupels $(j_1,k_1,j_2,k_2)$.
	After this summation, \eqref{S1A6} will contribute to the upper bounds for \eqref{S1A5} with the following quantities.
	On the one hand
	\begin{align*}
		\frac{D^5}{\sigma^5} 
			\sum_{j \in J}
			\E[ \vert X_{j_1} \vert ]
	\end{align*}
	and on the other hand
	\begin{align*}
		\frac{32}{\sigma^6} 
			\sum_{(j_1,j_2,j_3,j_4,j_5,j_6) \in J_c}
			\E[ Y_{j_1} Y_{k_1} Y_{m_1} Y_{j_2} Y_{k_2} Y_{m_2} ].
	\end{align*}
	
	Finally, we can join all upper bounds from this part of the proof to construct upper bounds for \eqref{scA1}.
	\begin{align*}
		\Var\Big(
				\sum_{\mathclap{\substack{j \in J \\ k \in N_j}}}
				X_j X_k
				f_{2,t} \Big( \sum_{m \in N_j} X_m \Big)
			\Big),
	\end{align*}
	which is the square of \eqref{scA1},
	is bounded by
	\begin{align*}
		11 \frac{D^5}{\sigma^5} \sum_{j \in J} \E[ \vert X_{j_1} \vert ]
	\,\,\,\,  \text{and by} \, \, \,\,
		\frac{320}{\sigma^6} \sum_{(j_1,j_2,j_3,j_4,j_5,j_6) \in J_c} \E[ Y_{j_1} Y_{k_1} Y_{l_1} Y_{j_2} Y_{k_2} Y_{l_2} ].
	\end{align*}
	Now it only remains to bound \eqref{scA2}.\\

	\noindent\textbf{Term \eqref{scA2}:} $\displaystyle\Big( \Var\Big(
				\sum_{\mathclap{\substack{j \in J \\ k \in N_j}}}
				X_j X_k
				f_{1,t} \Big( \sum_{m \in N_j \cup N_k} X_m \Big)
			\Big) \Big)^{1/2}$\\
	The variance can be bounded analogously to the previous one.
	Therefore, we will only point out what has to be changed in the calculations above.
	Instead of $f_{2,t}$ we now have to use $f_{1,t}$.
	Since $\Vert f^\prime_{1,t} \Vert_\infty^2 = 4 \cdot \Vert f^\prime_{2,t} \Vert_\infty^2$
	we here get $4$ as an extra factor in the bound compared to the bound from the previous part of the proof.
	In the argument of $f_{1,t}$ we now have more summands than in the argument of $f_{2,t}$, but at most double as many.
	Hence, we get again an extra factor $2^2=4$.
	We conclude, that
	\begin{align*}
		\Var\Big(
				\sum_{\mathclap{\substack{j \in J \\ k \in N_j}}}
				X_j X_k
				f_{1,t} \Big( \sum_{m \in N_j \cup N_k} X_m \Big)
			\Big)
	\end{align*}
	is bounded by
	\begin{align*}
		176 \frac{D^5}{\sigma^5} \sum_{j \in J} \E[ \vert X_{j_1} \vert ]
\,\,\,\, \text{and by} \,\,\,\,
		\frac{5120}{\sigma^6} \sum_{(j_1,j_2,j_3,j_4,j_5,j_6) \in J_c} \E[ Y_{j_1} Y_{k_1} Y_{l_1} Y_{j_2} Y_{k_2} Y_{l_2} ].
	\end{align*}\mbox{}\\
	
	\noindent\textbf{Summary:}
	We now have bounded \eqref{scA1}, \eqref{scA2}, \eqref{scA3} separately.
	Merging all previous results implies after a small calculation the following bounds.
	On the one hand
	\begin{align*}
		\vert \Cov( H_t  , e^{-itW}) \vert
		\leq 20\frac{D^\frac52}{\sigma^\frac52} \cdot \Big( \sum_{j \in J} \E[ \vert X_j \vert ] \Big)^{1/2}
	\end{align*}
	and on the other hand
	\begin{align*}
		\vert \Cov( H_t  , e^{-itW}) \vert
		\leq \frac{113}{\sigma^3} 
			\Big( \sum_{(j_1,j_2,j_3,j_4,j_5,j_6) \in J_c	} \E[ Y_{j_1} Y_{j_2} Y_{j_3} Y_{j_4} Y_{j_5} Y_{j_6} ] \Big)^{1/2}.
	\end{align*}
	Therefore, the proof is complete.
\end{proof}

Now, as \autoref{sc-SchrankenH} and \autoref{sc-SchrankenC} have been proved,
we continue by examining the quantities
$\displaystyle\sum_{j \in J} \E[ \vert X_j \vert ]$,
$\displaystyle\sum_{\substack{j \in J \\ k, l \in N_j}} \E[ Y_j Y_k Y_l ]$
and
$\displaystyle\Big( \sum_{(j_1,j_2,j_3,j_4,j_5,j_6) \in J_c} \E[ Y_{j_1} Y_{j_2} Y_{j_3} Y_{j_4} Y_{j_5} Y_{j_6} ] \Big)^{1/2}$,
which appear as upper bounds in those two lemmas.

\begin{lem}\label{Sechser-EW}
	It holds that
	\begin{align}
		\sum_{j \in J} \E[ \vert X_j \vert ]
		&\leq \frac2\sigma \cdot n^{v_\G} \cdot e_\G \cdot (1-p), \label{end1}\\
		\sum_{\substack{j \in J \\ k \in N_j \\ l \in N_j}} \E[ Y_j Y_k Y_l ]
		&\leq C_\G
			\Psi_\G^3
			\Big( \sum_{h \subset \G} \Psi_h^{-1} \Big)^2, \label{end2}\\
		\sum_{(j_1,j_2,j_3,j_4,j_5,j_6) \in J_c} \E[ Y_{j_1} Y_{j_2} Y_{j_3} Y_{j_4} Y_{j_5} Y_{j_6} ]
		&\leq C_\G
			\Psi_\G^6
			\Big( \sum_{h \subset \G} \Psi_h^{-1} \Big)^5. \label{end3}
	\end{align}
\end{lem}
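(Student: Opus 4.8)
The plan is to reduce all three estimates to a single ``one-step'' subgraph-extension bound and then iterate it.

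First, \eqref{end1} is a direct computation. Each $Y_j=\prod_{m\in j}I_m$ is $\mathrm{Bernoulli}(p^{e_\G})$-distributed, since $e_j=e_\G$ and the $I_m$ are independent; hence $\E[|X_j|]=\sigma^{-1}\E[|Y_j-p^{e_\G}|]=2\sigma^{-1}p^{e_\G}(1-p^{e_\G})$. Summing over the $|J|\le n^{v_\G}$ copies of $\G$ in $K_n$ and using $p^{e_\G}\le1$ together with $1-p^{e_\G}=(1-p)\sum_{i=0}^{e_\G-1}p^{i}\le e_\G(1-p)$ yields exactly $\sum_{j\in J}\E[|X_j|]\le\frac{2}{\sigma}\,n^{v_\G}e_\G(1-p)$.

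For \eqref{end2} and \eqref{end3} the starting point is $\E[Y_{j_1}\cdots Y_{j_m}]=p^{|j_1\cup\cdots\cup j_m|}$, where $|\cdot|$ denotes the number of edges of the union graph, since $Y_{j_1}\cdots Y_{j_m}=\prod_{a\in j_1\cup\cdots\cup j_m}I_a$. I would establish the following one-step lemma: for any fixed graph $F$ on at most $6v_\G$ vertices,
\begin{align*}
	\sum_{\substack{j'\in J\\ j'\cap F\text{ has an edge}}} p^{|F\cup j'|} \;\le\; C_\G\, p^{|F|}\,\Psi_\G\sum_{h\subset\G}\Psi_h^{-1}.
\end{align*}
To prove it, partition the copies $j'$ in the sum according to which labelled subgraph $s$ of $F$ coincides with $j'\cap F$; there are at most $C_\G$ such types, each with $e_s\ge1$. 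For a fixed type we have $|F\cup j'|=|F|+e_\G-e_s$, and the number of copies $j'$ of $\G$ extending the fixed $s$ is at most $C_\G\,n^{v_\G-v_s}$ (realize $s$ inside $\G$ in at most $C_\G$ ways, then place the remaining $v_\G-v_s$ vertices in $[n]$). Thus this type contributes at most $p^{|F|}\,C_\G\,n^{v_\G-v_s}p^{e_\G-e_s}=p^{|F|}\,C_\G\,\Psi_\G\,(n^{v_s}p^{e_s})^{-1}$; deleting the isolated vertices of $s$ gives a subgraph $h\subset\G$ with $e_h=e_s\ge1$ and, using $p\le1$, $n^{v_s}p^{e_s}\ge n^{v_h}p^{e_h}=\Psi_h$, so the claim follows after summing over the finitely many types.

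Finally I would iterate this lemma. For \eqref{end2}: peel off $l$ with $F=j\cup k$ (note $l\in N_j$ forces $l$ to share an edge with $F$), then peel off $k$ with $F=j$, and close with $\sum_{j\in J}p^{e_\G}\le n^{v_\G}p^{e_\G}=\Psi_\G$; each of the two peelings supplies a factor $C_\G\Psi_\G\sum_h\Psi_h^{-1}$, giving $C_\G\Psi_\G^{3}\big(\sum_h\Psi_h^{-1}\big)^{2}$. For \eqref{end3}: by the definition of $J_c$ each $j_{i+1}$ shares an edge with $j_1\cup\cdots\cup j_i$, so I peel off $j_6,j_5,\dots,j_2$ in turn (five times), each time with $F=j_1\cup\cdots\cup j_{i-1}$, which has at most $5v_\G$ vertices, and close with $\sum_{j_1\in J}p^{e_\G}\le\Psi_\G$; this produces $\Psi_\G\cdot\big(C_\G\Psi_\G\sum_h\Psi_h^{-1}\big)^{5}=C_\G\Psi_\G^{6}\big(\sum_h\Psi_h^{-1}\big)^{5}$. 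The only genuine work is the one-step lemma: one must keep the direction of all inequalities correct (repeatedly invoking $p\le1$, and $e_s\le e_\G$ so that $e_\G-e_s\ge0$) and check that the auxiliary graphs $F$ arising in the iteration have vertex number bounded in terms of $\G$, so that every combinatorial prefactor is a legitimate $C_\G$.
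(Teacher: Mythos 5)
Your proposal is correct and follows essentially the same route as the paper: a direct computation for \eqref{end1}, and for \eqref{end2}--\eqref{end3} the classification of each newly added copy of $\G$ by its intersection subgraph with the union already built, counted via at most $C_\G\, n^{v_\G-v_h}p^{e_\G-e_h}=C_\G\,\Psi_\G\,\Psi_h^{-1}$ placements. Your one-step extension lemma, iterated five (resp.\ two) times, is just a modular repackaging of the paper's single nested sum over $j_1,h_1,j_2,\dots,h_5,j_6$.
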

\begin{proof}
	The cardinality of $J$ can be computed combinatorically:
	\begin{align*}
		\vert J \vert \leq \frac{n!}{(n-v_\G)!} \leq n^{v_\G}.
	\end{align*}
	In addition, we find the following bound for the expectation of $\vert X_j \vert$:
	\begin{align*}
		\E[ \vert X_j \vert ] & = \frac1\sigma \E [ \vert (1-Y_j) - \E[1-Y_j] \vert ] \\
		&\leq \frac2\sigma  \E [ 1-Y_j ] = \frac2\sigma (1-p^{e_\G}) \\
		&= \frac2\sigma (1-p) \cdot \sum_{a=0}^{e_\G-1} p^a 
		\leq \frac2\sigma (1-p) \cdot e_\G
	\end{align*}
	These inequalities together imply \eqref{end1}.\\
	
	The proofs of \eqref{end2} and \eqref{end3} resemble each other.
	Hence, we will only show the proof of \eqref{end3}.
	Our strategy is based on a proof from \cite{BKR89} and generalizes it.\\
	First, we want to illustrate the main idea:
	Let $j \in J$ be a fixed graph.
	We now want to calculate an upper bound for the number of its neighbors,
	which is the cardinality of $N_j$.
	For any graph $k \in J$, it holds, that $k$ is a neighbor of $j$ ($k \in N_j$),
	if and only if the intersection of $k$ und $j$ is a graph with at least one edge.
	Therefore, we will fix a graph $h \subset j$ and then count,
	how many graphs $k \in J$ exist so that the intersection of $j$ and $k$ is equal to $h$ ($j \cap k = h$).
	In this case, obviously, $h$ is a subgraph of $k$.
	So, $k$ consists of the $v_h$ vertices of $h$ and $v_\G - v_h$ other vertices.
	The latter can be arbitrarily chosen among the $n - v_h$ spare vertices.
	This gives us $\binom{n-v_h}{v_\G-v_h}$ possible sets of vertices for $k$.
	To derive the number of neighbors $\vert N_j \vert$,
	we need to multiply this binomial coefficient with
	the number of subgraphs, which exist on $v_\G$ given vertices and include the subgraph $h$.
	This constant does only depend on $\G$.
	This strategy will be executed repeatedly.
	During the following calculation, we will use the notation ``$\underset{\G}{\subset}$''.
	This will denote, that the left side is a subgraph of the right side, where the left side in addition is isomorphic to a subgraph of $\G$.
	We finally find:
	\begin{align*}
		&\sum_{(j_1,j_2,j_3,j_4,j_5,j_6) \in J_c} \E[ Y_{j_1} Y_{j_2} Y_{j_3} Y_{j_4} Y_{j_5} Y_{j_6} ]\\[1em]
		&\leq
			\sum_{j_1 \in J}
			\sum_{h_1 \underset{\G}{\subset} j_1}
			\sum_{\substack{j_2 \in J \\ j_1 \cap j_2 = h_1}}
			\sum_{h_2 \underset{\G}{\subset} j_1 \cup j_2}
			\sum_{\substack{j_3 \in J \\ (j_1 \cup j_2) \cap j_3 = h_2}}
			\sum_{h_3 \underset{\G}{\subset} j_1 \cup j_2 \cup j_3}
			\sum_{\substack{j_4 \in J \\ (j_1 \cup j_2 \cup j_3) \cap j_4 = h_3}}
			\sum_{h_4 \underset{\G}{\subset} j_1 \cup j_2 \cup j_3 \cup j_4}\\
			&\hspace{1cm}
			\sum_{\substack{j_5 \in J \\ (j_1 \cup j_2 \cup j_3 \cup j_4) \cap j_5 = h_4}}
			\sum_{h_5 \underset{\G}{\subset} j_1 \cup j_2 \cup j_3 \cup j_4 \cup j_5}
			\sum_{\substack{j_6 \in J \\ (j_1 \cup j_2 \cup j_3 \cup j_4 \cup j_5) \cap j_6 = h_5}}
			p^{6e_\G - e_{h_1} - e_{h_2} - e_{h_3} - e_{h_4} - e_{h_5}}\\[1em]
		&\leq
			C_\G
			\binom{n}{v_\G}
			\sum_{h_1 \subset\G}
			\binom{n-v_{h_1}}{v_\G-v_{h_1}}
			\sum_{h_2 \subset\G}
			\binom{n-v_{h_2}}{v_\G-v_{h_2}}
			\sum_{h_3 \subset\G}
			\binom{n-v_{h_3}}{v_\G-v_{h_3}}\\
			&\hspace{3cm}
			\sum_{h_4 \subset\G}
			\binom{n-v_{h_4}}{v_\G-v_{h_4}}
			\sum_{h_5 \subset\G}
			\binom{n-v_{h_5}}{v_\G-v_{h_5}}
			p^{6e_\G - e_{h_1} - e_{h_2} - e_{h_3} - e_{h_4} - e_{h_5}}\displaybreak\\[1em]
		&\leq
			C_\G
			\sum_{h_1,h_2,h_3,h_4,h_5 \subset\G}
			\binom{n}{v_\G}
			\binom{n-v_{h_1}}{v_\G-v_{h_1}}
			\binom{n-v_{h_2}}{v_\G-v_{h_2}}
			\binom{n-v_{h_3}}{v_\G-v_{h_3}}\\
			&\hspace{3cm}
			\binom{n-v_{h_4}}{v_\G-v_{h_4}}
			\binom{n-v_{h_5}}{v_\G-v_{h_5}}
			p^{6e_\G - e_{h_1} - e_{h_2} - e_{h_3} - e_{h_4} - e_{h_5}}\\[1em]
		&\leq
			C_\G
			\sum_{h_1,h_2,h_3,h_4,h_5 \subset\G}
			n^{6v_\G - v_{h_1} - v_{h_2} - v_{h_3} - v_{h_4} - v_{h_5}}
			p^{6e_\G - e_{h_1} - e_{h_2} - e_{h_3} - e_{h_4} - e_{h_5}}\\
		&=
			C_\G
			(n^{v_\G}p^{e_\G})^6
			\Big( \sum_{h \subset \G} n^{-v_h}p^{-e_h} \Big)^5\\
		&=
			C_\G
			\Psi_\G^6
			\Big( \sum_{h \subset \G} \Psi_h^{-1} \Big)^5. \qedhere
	\end{align*}
\end{proof}

Before we can start the final part of our main theorem's proof,
we need a lower bound for the variance $\sigma^2$ of our random variable $W$.
The following \autoref{satz:sigma} is a direct consequence of Lemma~3.5 from \cite{JLR00}.
Therefore, we will omit the proof.

\begin{cor}\label{satz:sigma}
	There are constants $\{ c_H \}_{H \subset \G}$, only depending on $\G$, so that
	\begin{align*}
		(1-p) \cdot c_{\tilde H} \cdot \frac{\Psi_G^2}{\Psi_{\tilde H}}
		\leq \sigma^2,
	\end{align*}
	where $\tilde H \subset \G$ is an arbitrarily chosen subgraph.
\end{cor}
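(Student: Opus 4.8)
The plan is to prove this directly from the covariance expansion of $\sigma^2$, keeping only nonnegative contributions; it is the lower half of the two-sided estimate in \cite[Lemma~3.5]{JLR00}. First I would write
\[
\sigma^2=\Var\Big(\sum_{j\in J}Y_j\Big)=\sum_{j,k\in J}\Cov(Y_j,Y_k),
\]
and compute, for $j,k\in J$ with intersection $H:=j\cap k$ (so $e_j=e_k=e_\G$), using that the $I_m$ are independent $\text{Bernoulli}(p)$ with $I_m^2=I_m$,
\[
\Cov(Y_j,Y_k)=p^{2e_\G-e_H}-p^{2e_\G}=p^{2e_\G-e_H}(1-p^{e_H})\ \ge\ 0,
\]
with the understanding that this vanishes when $H$ has no edge (in which case $Y_j$ and $Y_k$ are independent). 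Since every summand is nonnegative, restricting the double sum to any subfamily of pairs still yields a valid lower bound for $\sigma^2$.

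Given the arbitrary $\tilde H\subset\G$, I would \emph{not} use overlaps of isomorphism type $\tilde H$ directly, since such overlaps may fail to exist (for instance, two distinct triangles never intersect in exactly a path on three vertices). Instead I would fix an embedding of $\tilde H$ into $\G$ and pass to $\hat H$, the subgraph of $\G$ \emph{induced} on the vertex set of that embedded copy; then $\hat H\supseteq\tilde H$ and $v_{\hat H}=v_{\tilde H}$, so, since $p\le 1$ and $e_{\hat H}\ge e_{\tilde H}$,
\[
\Psi_{\hat H}=n^{v_{\tilde H}}p^{e_{\hat H}}\ \le\ n^{v_{\tilde H}}p^{e_{\tilde H}}=\Psi_{\tilde H}.
\]
The advantage of passing to an induced subgraph is realizability: I would keep exactly those ordered pairs $(j,k)\in J^2$ whose overlap $S:=V(j)\cap V(k)$ has $v_{\tilde H}$ vertices, on which $j$ and $k$ restrict to one and the same copy of $\hat H$, and for which the fresh parts $V(j)\setminus S$ and $V(k)\setminus S$ are disjoint. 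For such a pair every common edge of $j$ and $k$ has both endpoints in $S$, so $j\cap k$ is a copy of $\hat H$ (here one uses that $\hat H$ is an induced subgraph of $\G$, hence of $j$ and of $k$), and therefore $\Cov(Y_j,Y_k)=p^{2e_\G-e_{\hat H}}(1-p^{e_{\hat H}})$. An explicit injective parametrization of these pairs by the triple $(S,\ V(j)\setminus S,\ V(k)\setminus S)$ then shows, for $n$ large, that there are at least $c_\G\,n^{2v_\G-v_{\tilde H}}$ of them, with $c_\G>0$ depending only on $\G$.

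Combining the two ingredients and using $e_{\hat H}\ge e_{\tilde H}\ge 1$, so that $p^{e_{\hat H}}\le p$ and hence $1-p^{e_{\hat H}}\ge 1-p$, I would then obtain
\begin{align*}
\sigma^2
&\ \ge\ c_\G\,n^{2v_\G-v_{\tilde H}}\,(1-p)\,p^{2e_\G-e_{\hat H}}\\
&=c_\G\,(1-p)\,\frac{(n^{v_\G}p^{e_\G})^2}{n^{v_{\tilde H}}p^{e_{\hat H}}}\\
&=c_\G\,(1-p)\,\frac{\Psi_\G^2}{\Psi_{\hat H}}\\
&\ \ge\ c_\G\,(1-p)\,\frac{\Psi_\G^2}{\Psi_{\tilde H}},
\end{align*}
which is the assertion with $c_{\tilde H}:=c_\G$ (in fact the same constant works for every $\tilde H$). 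The case $\tilde H=\G$ is simply the diagonal $j=k$, where $\Cov(Y_j,Y_j)=p^{e_\G}(1-p^{e_\G})$, and needs no separate treatment.

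The only genuinely delicate point is the realizability/counting claim: one must notice that an arbitrary $\tilde H$ need not arise as the intersection of two copies of $\G$, replace it by the induced subgraph on its vertices, and then check both that this induced subgraph does occur as such an intersection and that the number of witnessing pairs is of order $n^{2v_\G-v_{\tilde H}}$ with a constant controlled by $\G$ alone. Everything else — the covariance identity, the elementary estimate $1-p^{e_{\hat H}}\ge 1-p$, and absorbing the fixed combinatorial factors into $c_\G$ — is routine, and for $n$ large the whole chain reproduces the lower estimate contained in \cite[Lemma~3.5]{JLR00}.
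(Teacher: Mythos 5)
Your argument is correct, but note that the paper itself does not prove this corollary: it is stated as a direct consequence of Lemma~3.5 in \cite{JLR00} and the proof is omitted. What you have written is essentially a self-contained reproduction of the lower half of that two-sided variance estimate, and the two genuinely delicate points are handled properly: first, every covariance $\Cov(Y_j,Y_k)=p^{2e_\G-e_{j\cap k}}-p^{2e_\G}\ge 0$, so one may discard all but a convenient family of pairs; second, since an arbitrary $\tilde H\subset\G$ need not be realizable as the edge-intersection of two copies of $\G$ (your triangle example is apt), you replace it by the induced subgraph $\hat H$ on $V(\tilde H)$, for which $\Psi_{\hat H}\le\Psi_{\tilde H}$ and for which pairs of copies of $\G$ overlapping in exactly one fixed copy of $\hat H$ do exist, in number of order $n^{2v_\G-v_{\tilde H}}$ with constants depending only on $\G$; the displayed chain then gives the claim. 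Two minor caveats: the counting of witnessing pairs is asserted rather than carried out (it is the standard parametrization by the shared vertex set and the two disjoint sets of fresh vertices, with combinatorial factors depending only on $\G$), and your bound is claimed only ``for $n$ large''. The latter is easily repaired if one wants the statement for all $n\ge v_\G$, as the citation to \cite{JLR00} provides: for $n$ bounded in terms of $\G$ the diagonal terms alone give $\sigma^2\ge |J|\,p^{e_\G}(1-p^{e_\G})\ge p^{e_\G}(1-p)\ge c_\G\,(1-p)\,n^{2v_\G-v_{\tilde H}}p^{2e_\G-e_{\hat H}}$, since then $n^{2v_\G-v_{\tilde H}}\le C_\G$ and $p^{2e_\G-e_{\hat H}}\le p^{e_\G}$; and in the application to the main theorem only the asymptotic regime matters anyway.
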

\noindent Note that the inequality in \autoref{satz:sigma} especially holds for $\Psi_{\tilde H} = \Psi$.\\

Now, we are ready to put things together to proof Theorem 1.1.

\begin{proof}[Proof (of \autoref{maintheorem})]
	The combination of \autoref{sc-SchrankenH}, \autoref{sc-SchrankenC} and \autoref{Sechser-EW} implies, that
	\begin{align*}
		\vert \E[H_t] \vert
			&\leq C_\G \cdot \frac{D^2 \cdot n^{v_\G} \cdot (1-p)}{\sigma^3},\\
		\vert \E[H_t] \vert
			&\leq C_\G \cdot \frac{\Psi_\G^3 \Big( \sum_{h \subset \G} \Psi_h^{-1} \Big)^2}{\sigma^3},\\
		\vert \Cov(H_t , e^{-itW}) \vert
			&\leq C_\G \cdot \frac{D^\frac52 \cdot n^\frac{v_\G}2 \cdot (1-p)^\frac12}{\sigma^3},\\
		\vert \Cov(H_t , e^{-itW}) \vert
			&\leq C_\G \cdot \frac{\Psi_\G^3 \Big( \sum_{h \subset \G} \Psi_h^{-1} \Big)^\frac52}{\sigma^3}.
	\end{align*}
	Plugging these inequalities into \autoref{theo:STMresult} leads to
	\begin{align*}
		\Kol(\L(W), \Normal(0,1))
			\leq{}&
			C_\G \cdot \Bigg( \frac{D^2 \cdot n^{v_\G} \cdot (1-p)}{\sigma^3}
			+ \frac{D^\frac12}{ n^\frac{v_\G}2 \cdot (1-p)^\frac12} \Bigg),\\
		\Kol(\L(W), \Normal(0,1))
			\leq{}&
			C_\G \cdot \Bigg( \frac{\Psi_\G^3 \Big( \sum_{h \subset \G} \Psi_h^{-1} \Big)^2}{\sigma^3}
			+ \Big( \sum_{h \subset \G} \Psi_h^{-1} \Big)^\frac12 \Bigg).
	\end{align*}
	It is easy to prove combinatorically, that
	\begin{align*}
		D &\leq C_\G \cdot \binom{n}{v_\G-2} \leq C_\G \cdot n^{v_\G -2}.
	\end{align*}
	It also holds, that
	\begin{align*}
		\sum_{h \subset \G} \Psi_h^{-1} &\leq C_\G \cdot \Psi^{-1}.
	\end{align*}
	In addition, \autoref{satz:sigma} implies, that
	\begin{align*}
		\frac1{\sigma^3}
		\leq C_\G \cdot
		\frac1{(1-p)^\frac32} \cdot \frac{\Psi^\frac32}{\Psi_\G^3}.
	\end{align*}
	These inequalities, together with the definition of $\Psi_\G = n^{v_\G} p^{e_\G}$
	and with the already known inequality $\Psi \leq n^2 \cdot p$, lead to
	\begin{align*}
		\Kol(\L(W), \Normal(0,1))
			\leq{}&
			C_\G \cdot \Bigg( \frac{n^{2v_\G-4} \cdot n^{v_\G} \cdot (1-p) \cdot n^3 \cdot p^\frac32}{(1-p)^\frac32 \cdot n^{3v_\G} p^{3e_\G}}
			+ \frac{n^{\frac{v_\G}2-1}}{ n^\frac{v_\G}2 \cdot (1-p)^\frac12} \Bigg),\\
		\Kol(\L(W), \Normal(0,1))
			\leq{}&
			C_\G \cdot \Bigg( \frac{1}{\Psi^\frac12 (1-p)^\frac32}
			+ \frac1{\Psi^\frac12} \Bigg).
	\end{align*}
	Thus,
	\begin{align*}
		\Kol(\L(W), \Normal(0,1))
			\leq{}&
			C_\G \cdot \frac{1}{ n \cdot \sqrt{1-p}} \cdot
			\Bigg( 1 + \frac{1}{p^{3(e_\G-\frac12)}} \Bigg),\\
		\Kol(\L(W), \Normal(0,1))
			\leq{}&
			C_\G \cdot \frac1{\sqrt{\Psi}} \cdot
			\Bigg( 1 + \frac{1}{(1-p)^\frac32} \Bigg).
	\end{align*}
	Now, let $p_0 \in (0,1)$.
	We can eventually conclude:
	\begin{align*}
		\Kol(\L(W), \Normal(0,1))
		\leq{}& C_{\G,p_0} \cdot
		\begin{dcases}
			\frac1{n\sqrt{1-p}}
			& \text{ if }p > p_0\\
			\frac1{\sqrt{\Psi}}
			& \text{ if }p \leq p_0
		\end{dcases}
	\end{align*}
	This proves the main theorem (\autoref{maintheorem}).
\end{proof}

\begin{rem}
	At last, we want to note that the main theorem still holds,
	if $\G$ has isolated vertices, as long as it has at least one edge:
	Let $a \in \N$ be the number of isolated vertices of $\G$.
	And let $\tilde\G \subset \G$ contain all edges of $\G$, but no isolated vertex.
	Then, the main theorem holds for $\tilde\G$.
	However, every copy of $\tilde\G$ in $G(n,p)$ can be completed to a copy of $\G$ in $\frac{(n-v_{\tilde\G})!}{(n-v_{\tilde\G}-a)!}$ different ways.
	Therefore, the standardized number of subgraphs of $G(n,p)$, which are isomorphic to $\tilde\G$,
	is identical to the standardized number of subgraphs of $G(n,p)$, which are isomorphic to $\G$.
\end{rem}


\end{document}